\newcommand{\beq}{\begin{eqnarray*}}
\newcommand{\eeq}{\end{eqnarray*}}
\newtheorem{theorem}{Theorem}[section]
\newtheorem{lemma}[theorem]{Lemma}
\newtheorem{corollary}[theorem]{Corollary}
\newtheorem{proposition}[theorem]{Proposition}
\newtheorem{remark}[theorem]{Remark}
\newtheorem{definition}[theorem]{Definition}
\newsavebox{\toy}
\savebox{\toy}{\framebox[0.65em]{\rule{0cm}{1ex}}}
\newcommand{\QED}{\usebox{\toy}}
\def\nlni{\par\ifvmode\removelastskip\fi\vskip\baselineskip\noindent}
\begin{document}
\setlength{\baselineskip}{15pt}
\title{
Determinantal Formula
for 
Generalized Riffle Shuffle
}
\author{
Fumihiko Nakano
\thanks{
Mathematical Institute, 
Tohoku University, 
Sendai 980-8578, Japan
e-mail : 
fumihiko.nakano.e4@tohoku.ac.jp
}
\and 
Taizo Sadahiro
\thanks{Department of Computer Science, 
Tsuda Colledge, 
2-1-1, Tsuda, Kodaira City, 187-8577, Tokyo, Japan.
e-mail : sadahiro@tsuda.ac.jp}
}
\maketitle
\begin{abstract}
We consider 
a generalized riffle shuffle on the colored permutation group 
$G_{p, n}$ 
and derive a determinantal formula for the probability of finding descents at given positions, proof of which is based on the bijection between the set of shuffles in question and that of non-intersecting lattice paths. 
\end{abstract}


\section{Introduction}
There are 
many studies on the theory of riffle shuffling on the permutation group 
$S_n$
from both algebraic and probabilistic point of view.
For instance, 
Bayer-Diaconis formulated the riffle shuffle as a random operation on 
$S_n$ 
and studied the corresponding random walk ; 
they analyzed  
the speed of convergence to the stationary distribution and showed that it exhibits the cut off phenomenon
(e.g., \cite{BD}).
Diaconis-Fulman
\cite{DF1, DF2}
showed that the descent process of this random walk is a Markov chain which has the same distribution as that of the carries process in adding numbers studied by Holte \cite{Holte}.
The authors in this paper 
previously considered a generalized riffle shuffle on the colored permutation group 
$G_{p, n}$, 
and a generalized carries process 
and showed that the descent process of the former has the same distribution as the latter
\cite{NS1, NS2}.
In this paper,
we study the generalized riffle shuffle studied in \cite{NS2} and derive a formula for the probability of finding descents at given positions.
To describe our results, 
we introduce some notations and definitions. 
\quad
First of all, 
we consider a set 
$\Sigma :=[n]  \times {\bf Z}_p$
($[n] :=\{ 1, 2, \cdots, n \}$, $p \in {\bf N}$).
For 
$q \in {\bf Z}_p$
let 
$T_q : \Sigma \to \Sigma$
be a shift given by
$T_q (i, r) := (i, r+q)$, 
$(i, r) \in \Sigma$.
The group 
$G_{p, n} (\simeq {\bf Z}_p \wr S_n)$
of colored permutations is defined as the set of bijections on 
$\Sigma$ 
which commute with 
$T$ : 
\beq
G_{p, n}
:=
\left\{
\sigma : \Sigma \to \Sigma
\, \middle| \, 
\sigma : 
\mbox{ bijection s.t. }
\sigma \circ T_1  = T_1 \circ \sigma
\right\}.
\eeq
%
%
%
By definition, 
$\sigma \in G_{p, n}$
is characterized by 
$\{ ( \sigma(i), \sigma_c (i) ) \}_{i=1}^n$, 
where 
$(\sigma(i), \sigma_c(i)) := \sigma(i, 0) \in \Sigma$, 
$i \in [n]$, 
and henceforth we write 
\beq
\sigma = \Bigl(
(\sigma(1), \sigma_c(1)), (\sigma(2), \sigma_c(2)), \cdots, (\sigma(n), \sigma_c(n))
\Bigr). 
\eeq
We define below 
two notions of order and corresponding descents on 
$G_{p, n}$. \\
(1)
(usual order and descent)

We consider 
an linear order on
$\Sigma$ 
as follows : 
\beq
&&
(1,0) < (2, 0) < \cdots < (n, 0) <
\\
&&
(1, p-1) < (2, p-1) < \cdots < (n, p-1) <
\\
&&
(1, p-2) < (2, p-2) < \cdots < (n, p-2) <
\\
&&
\cdots 
\\
&&
(1, 1) < (2,1) <  \cdots < (n, 1).
\eeq
We say that 
$\sigma
\in 
G_{p, n}$
has a 
{\bf d-descent} at 
$i$ 
if and only if 
(i)
$(\sigma(i),\sigma_c(i)) > (\sigma(i+1), \sigma_c(i+1))$
(for $i=1, 2, \cdots, n-1$)
and 
(ii)
$\sigma_c(n) \ne 0$
(for $i=n$). 
We denote by 
$d(\sigma)$
the number of descents of 
$\sigma$. \\
%
(2)
(dash-order and dash-descent) \\
We also 
consider another notion of order and corresponding descent.
We define the ``dash-order" $<'$ on 
$\Sigma$ : 
\beq
&&
(1,0) <' (2,0) <' \cdots <' (n,0) <'\\
&&
(1,1) <' (2,1) <' \cdots <' (n,1) <' \\
&&
\cdots 
\\
&&
(1, p-1) <' (2,p-1) <' \cdots <' (n, p-1).
\eeq
we say that 
$\sigma \in G_{p, n}$
has a {\bf d'-descent} at 
$i$ 
if and only if 
(i')
$(\sigma(i), \sigma_c(i)) >' (\sigma(i+1), \sigma_c(i+1))$
(for 
$i=1, 2, \cdots, n-1$) 
and 
(ii')
$\sigma_c(n)=p-1$
(for 
$i=n$). 
We denote by 
$d' (\sigma)$ 
the number of dash-descents of 
$\sigma$. 
For
$p=2$, 
$d'(\sigma) = d(\sigma)$. 
\\

Here we shall consider 
the generalized riffle shuffle called  
{\bf $(b, n, p)$-shuffle}
which is, 
roughly speaking, 
to carry out the usual 
$b$-shuffle to 
``$n$ cards with $p$ colors", 
but to apply  
$T_r$-shift to the
$(jp+r)$-th pile.
To be precise, 
it is defined as follows : 

(i)
pick up 
$n$ 
numbers uniformly at random from 
${\cal D}(b) := \{ 0, 1, \cdots, b-1 \}$
yielding an array of numbers 
${\bf A} := (A_1, A_2, \cdots, A_n) \in {\cal D}(b)^n$, 
which we call labels, 

(ii)
rearrange 
$1,2, \cdots, n$
according to the order of labels 
$A_1, \cdots, A_n$
so that for each 
$i=1, \cdots, n$
we have 
$k_i \in [n]$
\footnote{
To be explicit, we first arrange 
$A_1, \cdots, A_n$
in ascending order, say 
$A_{i(1)}, A_{i(2)}, \cdots, A_{i(n)}$.
Then we set
$\sigma^{-1}(1) = i(1)$, 
$\cdots$, 
$\sigma^{-1}(n) = i(n)$.
}.
This is 
the same procedure to have the  
$b$-shuffle from its Gilbert-Shanon-Reeds representation (GSR representation in short), 

(iii)
If 
$a_i \equiv q_i \in {\bf Z}_p \pmod p$, 
set  
$(\sigma(i), \sigma_c(i)):=(k_i, q_i)$, 
$i=1, \cdots, n$, 
which determines   
$\sigma
=
\Bigl(
(\sigma(1), \sigma_c(1)), (\sigma(2), \sigma_c(2)), \cdots, (\sigma(n), \sigma_c(n))
\Bigr) \in G_{p, n}$. 

\noindent
We denote  
$\sigma$
by 
$\pi [{\bf A}] \in G_{p, n}$, 
and call 
${\bf A}=(A_1, \cdots, A_n)$
the GSR representation of 
$\sigma$. 
We show an example below.
In this example, 
$(b, n, p) = (5, 4, 3)$, 
and
${\bf A} = (4, 0, 2, 4) \in {\cal D}(5)$. 
$\sigma$
has d-descents at 
$i = 1, 4$, 
and d'- descents at 
$i = 1, 3$. 
\beq
\begin{array}{c|cc}
i & A_i & (\sigma(i), \sigma_c(i)) \\ \hline
1 & 4 & (3,1) \\
2 & 0 & (1,0) \\
3 & 2 & (2,2) \\
4 & 4 & (4,1) 
\end{array}
\eeq
We shall consider 
the probability that 
$(b, n, p)$-shuffle
has d-descents or d'-descents at given positions.
Let 
$k \in [n]$,  
take 
$s_1, \cdots, s_k$
such that 
$0 < s_1 < s_2 < \cdots < s_k \le n$, 
and let 
$s_0 := 0$, 
$s_{k+1} := n$.
We set : 
\beq
P(s_1, \cdots, s_k)
&:=&
{\bf P}
\bigl(
(b, n, p)
\mbox{-shuffle
has d-descents at}
(s_1, \cdots, s_k)
\bigr)
\\
P'(s_1, \cdots, s_k)
&:=&
{\bf P}
\bigl(
(b, n, p)
\mbox{-shuffle
has d'-descents at}
(s_1, \cdots, s_k)
\bigr).
\eeq
\begin{theorem}
\label{shuffle}
\mbox{}\\
Let 
$b \equiv 1 \pmod p$
so that 
$b = pc + 1$
for some 
$c \in {\bf N}$.
Then
$P(s_1, s_2, \cdots, s_k)$
is given by the following determinantal formula : 

\noindent
(1)
$s_k < n$ : 
%
\beq
P(s_1, \cdots, s_k)
&=&
\frac {1}{b^n}
\det
\Bigl(
f(i, j)
\Bigr)_{i, j=0, \cdots, k}
\\
\mbox{where}\quad
f(i, j)
&:=&
\begin{cases}
\left( \begin{array}{c}
s_{j+1} - s_i + b-1 \\ b-1
\end{array} \right), 
&
(0 \le i \le k, 
\;
0 \le j \le k-1)
\\
\left( \begin{array}{c}
n - s_i + c \\ c
\end{array} \right), 
&
(0 \le i \le k, 
\;
j = k)
\end{cases}
\eeq
(2)
$s_k = n$ :
\beq
P(s_1, \cdots, s_{k-1}, n)
&=&
\frac {1}{b^n}
\det
\Bigl(
f(i, j)
\Bigr)_{i, j=0, \cdots, k-1}
\\
\mbox{where}\quad
f(i, j)
&:=&
\begin{cases}
\left( \begin{array}{c}
s_{j+1} - s_i + b-1 \\ b-1
\end{array} \right), 
&
(0 \le i \le k-1, 
\;
0 \le j \le k-2)
\\
\left( \begin{array}{c}
n - s_i + b-1 \\ b-1
\end{array} \right)
-
\left( \begin{array}{c}
n - s_i + c \\ c
\end{array} \right), 
&
(0 \le i \le k-1, 
\; 
j=k-1)
\end{cases}
\eeq
\end{theorem}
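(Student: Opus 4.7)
The plan is to realize $P(s_1,\dots,s_k) \cdot b^n$ as a count of vertex-disjoint lattice path systems and then invoke the Lindström--Gessel--Viennot (LGV) lemma.

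First I would introduce a total order $\preceq$ on the alphabet $\{0,1,\dots,b-1\}$ that characterizes the absence of d-descents: inspecting the d-order on $\Sigma$, one checks that ``no d-descent at position $i$'' is equivalent to $A_i \preceq A_{i+1}$, where $\preceq$ sorts labels first by color in the sequence $0, p-1, p-2, \dots, 1$ and, within a fixed color, by magnitude. In particular, the $c+1$ smallest labels in this order are exactly the color-$0$ labels $0, p, 2p, \dots, pc$. With this in hand, the event that the d-descent set equals $\{s_1,\dots,s_k\}$ translates into: each block $[s_j+1, s_{j+1}]$ is $\preceq$-weakly increasing; $A_{s_j} \succ A_{s_j+1}$ at every block boundary $j=1,\dots,k$; and the color of $A_n$ is $0$ when $s_k<n$, or nonzero when $s_k=n$.

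Next I would invoke the standard bijection sending a $\preceq$-weakly increasing sequence of length $m$ in $\{0,\dots,b-1\}$ to a monotone east--north lattice path from $(0,0)$ to $(m,b-1)$, with the $\preceq$-rank of the $i$-th entry recorded as the height of the $i$-th east step. Block $j$ then becomes a path $P_j$ from $A_j=(s_j,0)$ to $B_j=(s_{j+1},b-1)$; in case (1) I reset $B_k=(n,c)$, which constrains $P_k$ to heights $\leq c$ and hence to labels of color $0$. The key observation is that at the column $x=s_j$, the path $P_{j-1}$ occupies heights in $[r_{j-1},b-1]$ where $r_{j-1}$ is the $\preceq$-rank of $A_{s_j}$, while $P_j$ occupies heights in $[0,r_j]$ where $r_j$ is the $\preceq$-rank of $A_{s_j+1}$; the two paths are vertex-disjoint at $x=s_j$ iff $r_{j-1} > r_j$, precisely a d-descent at $s_j$. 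This yields an exact bijection between vertex-disjoint $(k+1)$-tuples and label sequences with d-descent set $\{s_1,\dots,s_k\}$, and LGV then delivers case (1) directly with the path counts $\binom{s_{j+1}-s_i+b-1}{b-1}$ and $\binom{n-s_i+c}{c}$.

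For case (2), where $s_k=n$, the requirement that $A_n$ have nonzero color amounts to the last east step of $P_{k-1}$ landing at height $\geq c+1$. I would express this count as (all paths from $A_{k-1}$ to $(n,b-1)$) minus (those whose last east step lies at height $\leq c$), observing that the latter subset is in bijection with paths from $A_{k-1}$ to $(n,c)$. Applying LGV to each sub-system separately and using linearity of the determinant in its last column combines them into a single determinant, producing the entries $\binom{n-s_i+b-1}{b-1}-\binom{n-s_i+c}{c}$ of case (2). The main obstacle I expect is the first step: checking that $\preceq$ precisely captures the ``no d-descent'' condition requires a careful case analysis separating same-color and distinct-color label pairs, and it is here that the particular color-rank ordering $0,p-1,p-2,\dots,1$ enters. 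Once this is in place, the LGV argument runs smoothly because the start points $A_i$ and end points $B_j$ lie in convex position, so only the identity matching contributes non-intersecting systems.
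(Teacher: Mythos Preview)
Your proposal is correct and follows essentially the same route as the paper: the order $\preceq$ you introduce is exactly the paper's tilde-order $\prec$ on $\mathcal{D}(b)$ (cf.\ Remark~2.1), your height-equals-rank encoding is the inverse of the paper's order-isomorphism $f(x)\equiv px\pmod b$ composed with its paths-to-sequences map, and your subtraction argument for $s_k=n$ coincides with the paper's $S_1\setminus S_2$ decomposition. The only difference is presentational---you collapse the paper's two intermediate bijections $\Pi\to\mathcal{D}_b\to\widetilde{\mathcal{D}}_b$ into one by working directly with the $\preceq$-rank.
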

\noindent
{\bf Example 1}\\
Let 
$p=4, c=2, n=6$, 
$b = 4 \cdot 2 + 1 = 9$, 
and 
$k=3$. 
\beq
P(1,2,4)
&=&
\frac {1}{9^6}
\left|
\begin{array}{cccc}
\left(
\begin{array}{c}
1 - 0 + 8 \\ 8
\end{array}
\right)
&
\left(
\begin{array}{c}
2 - 0 + 8 \\ 8 
\end{array}
\right)
&
\left(
\begin{array}{c}
4 - 0 + 8 \\ 8
\end{array}
\right)
&
\left(
\begin{array}{c}
6 - 0 + 2 \\ 2
\end{array}
\right)
\\
1 & 
\left(
\begin{array}{c}
2 - 1 + 8 \\ 8
\end{array}
\right)
&
\left(
\begin{array}{c}
4 - 1 + 8 \\ 8
\end{array}
\right)
&
\left(
\begin{array}{c}
6 - 1 + 2 \\ 2 
\end{array}
\right)
\\
0 & 1 & 
\left(
\begin{array}{c}
4 - 2 + 8 \\ 8
\end{array}
\right)
&
\left(
\begin{array}{c}
6 - 2 + 2 \\ 2
\end{array}
\right)
\\
0 & 0 & 1 & 
\left(
\begin{array}{c}
6 - 4 + 2 \\ 2 
\end{array}
\right)
\end{array}
\right|
=
\frac {3401}{9^6}
\\
P(1,2,6)
&=&
\frac {1}{9^6}
\left|
\begin{array}{ccc}
\left(
\begin{array}{c}
1 - 0 + 8 \\ 8 
\end{array}
\right)
&
\left(
\begin{array}{c}
2 - 0 + 8 \\ 8
\end{array}
\right)
&
\left(
\begin{array}{c}
6- 0 + 8 \\ 8
\end{array}
\right)
-
\left(
\begin{array}{c}
6 - 0 + 2 \\ 2
\end{array}
\right)
\\
1 & 
\left(
\begin{array}{c}
2 - 1 + 8 \\ 8
\end{array}
\right)
&
\left(
\begin{array}{c}
6 - 1 + 8 \\ 8
\end{array}
\right)
-
\left(
\begin{array}{c}
6 - 1 + 2 \\ 2
\end{array}
\right)
\\
0 & 1 & 
\left(
\begin{array}{c}
6 - 2 + 8 \\ 8
\end{array}
\right)
-
\left(
\begin{array}{c}
6 - 2 + 2 \\ 2
\end{array}
\right)
\end{array}
\right|
= 
\frac {8861}{9^6}
\eeq
%

\begin{remark}
\mbox{}\\
(1)
These formulas 
have been obtained by 
MacMahon \cite {M}, 
Borodin-Diaconis-Fulman \cite{BDF}
for $p=1$, 
and 
Reiner \cite{R}
for $p=2$.
\\
(2)
Let 
${\cal L} (\subset {\bf Z}^2)$
be a subgraph of the square lattice and let 
$P_0, \cdots, P_k, Q_1, \cdots, Q_{k+1} \in {\cal L}$
be 
$2(k+1)$ vertices in 
${\cal L}$ 
given by 
\beq
{\cal L}
&:=&
\bigl(
\{0, 1, \cdots, n-1 \}
\times
\{0, 1, \cdots, b-1 \}
\bigr)
\cup
\{(n, j) 
\, | \, 
j = 0, 1, \cdots, c \}
\\
P_i &:=& (s_i, 0), 
\quad
0 \le i \le k
\\
Q_j &:=& (s_j, b-1), 
\quad
1 \le j \le k, 
\quad
Q_{k+1} := (s_{k+1}, c)
\eeq
Then 
the determinant in the formula in Theorem \ref{shuffle}(1) 
is equal to the number of non-intersecting 
$(k+1)$ paths from 
$P_i$
to 
$Q_{i+1}$
($i=0, 1, \cdots, k$) 
of minimal length (Figure 1).

\vspace*{1em} 

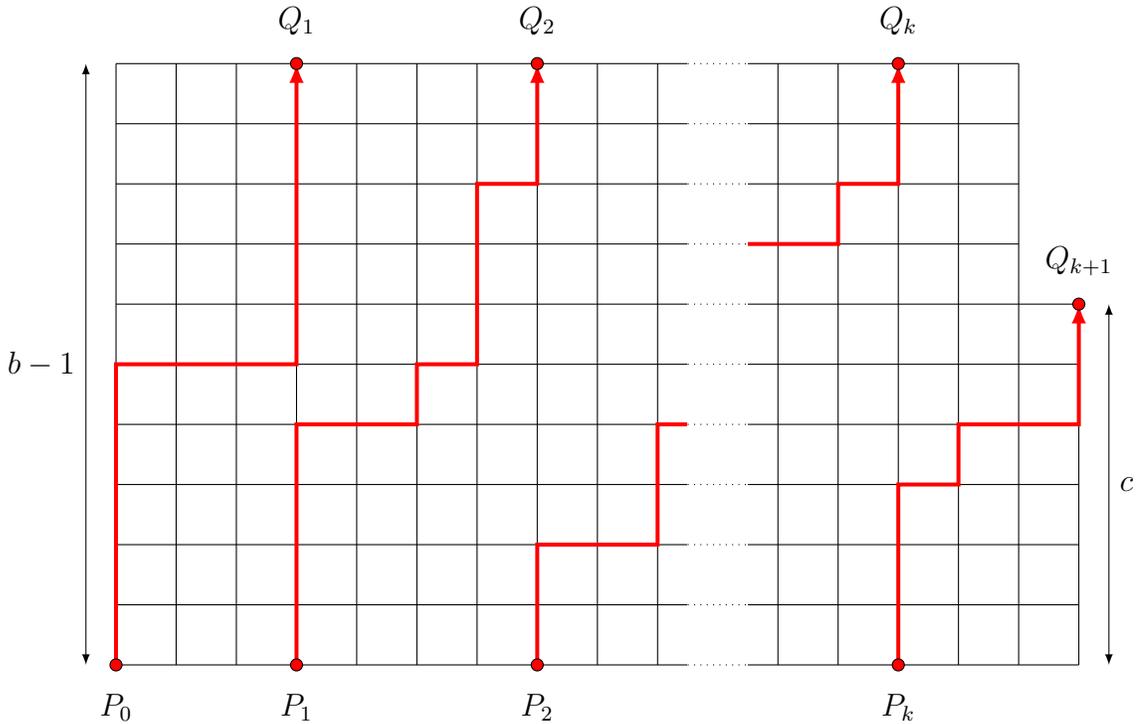
\begin{figure} [H]
\begin{center}
\begin{tikzpicture}[scale=0.8]
 \foreach \x in {0,1,...,15}{
 \draw (\x, 0) -- (\x,10);
 }
 \foreach \y in {0,1,...,10}{
 \draw (0,\y) -- (15,\y);
 }
 \foreach \k in {0,1,...,6}{
 \draw (15,\k) -- (16,\k);
 }
 \draw (16,0) -- (16,6);

 \draw[red,->,>=latex,line width=1.5] (0,0) -- (0,5) -- (3,5) -- (3,10);
 \draw[red,->,>=latex,line width=1.5] (3,0) -- ++(0,4) -- ++(2,0) -- ++(0,1) -- ++(1,0) -- ++(0,3) -- ++(1,0) -- ++(0,2);
 \draw[red,>=latex,line width=1.5] (7,0) -- ++(0,2) -- ++(2,0) -- ++(0,2) --++(0.5, 0) ++(2,3) ;
 \draw[red,->,>=latex,line width=1.5] (10.5,7) -- ++(1.5,0) -- ++(0,1) -- ++(1,0) -- ++(0,2);
 \draw[red,->,>=latex,line width=1.5] (13,0) -- ++(0,3) -- ++(1,0) -- ++(0,1) -- ++(2,0) -- ++(0,2);

 \foreach \x in {3,7,13}{
 \draw[fill=red] (\x, 0) circle (0.1);
 \draw[fill=red] (\x, 10) circle (0.1);
 }
 \draw[fill=red](0,0) circle (0.1);
 \draw[fill=red](16,6) circle (0.1);

 \draw (0, -0.3) node[below]{$P_0$};
 \draw (3, -0.3) node[below]{$P_1$};
 \draw (7, -0.3) node[below]{$P_2$};
 \draw (10,0) node[below]{$\cdots$};
 \draw (13, -0.3) node[below]{$P_k$};
 \draw (3, 10.3) node[above]{$Q_1$};
 \draw (7, 10.3) node[above]{$Q_2$};
 \draw (13, 10.3) node[above]{$Q_k$};
 \draw (16, 6.3) node[above]{$Q_{k+1}$};

 \draw[<->,>=latex] (-0.5,0) -- (-0.5,10);
 \draw (-0.5,5) node[left] {$b-1$};
 \draw[<->,>=latex] (16.5,0) -- (16.5,6);
 \draw (16.5,3) node[right] {$c$};

 \fill[fill=white] (9.5,-0.5) rectangle ++(1,11);
 \foreach \y in {0,1,...,10}{
 \draw[dotted](9.5,\y) -- ++(1,0);
 }

\end{tikzpicture}
\end{center}
\caption{
Non-intersecting lattice paths on 
${\cal L}$ 
}
\end{figure}

\end{remark}
\noindent
We 
turn to consider the case for 
$b \equiv -1 \pmod p$
where we have a formula for 
d'-descent : 
\begin{theorem}
\mbox{}\\
\label{shuffle2}
$b \equiv -1 \pmod p$
so that 
$b = pc -1$ 
for some 
$c \in {\bf N}$.
$P'(s_1, s_2, \cdots, s_k)$
satisfies the same formula as Theorem \ref{shuffle} except that 
$f(i,j)$ 
is replaced by 
$f'(i,j)$ 
given below : \\

\noindent
(1)
$s_k < n$ : 
\beq
f'(i, j)
&:=&
\begin{cases}
\left( \begin{array}{c}
s_{j+1} - s_i + b-1 \\ b-1
\end{array} \right), 
&
( 0 \le i \le k, 
\;
0 \le j \le k-1 )
\\
\left( \begin{array}{c}
n - s_i + b-c \\ b-c
\end{array} \right), 
&
( 0 \le i \le k, 
\;
j = k )
\end{cases}
\eeq
(2)
$s_k = n$ : 
\beq
f'(i, j)
&:=&
\begin{cases}
\left( \begin{array}{c}
s_{j+1} - s_i + b-1 \\ b-1
\end{array} \right), 
&
( 0 \le i \le k-1, 
\;
0 \le j \le k-2 )
\\
\left( \begin{array}{c}
n - s_i + b-1 \\ b-1
\end{array} \right)
-
\left( \begin{array}{c}
n - s_i + b-c \\ b-c
\end{array} \right), 
&
( 0 \le i \le k-1, 
\; 
j=k-1 )
\end{cases}
\eeq
\end{theorem}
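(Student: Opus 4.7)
My plan is to adapt the proof strategy of Theorem \ref{shuffle} to the dash-order setting, with modifications reflecting the different ordering on $\Sigma$ and the different singleton condition at position $n$. The first step is to recode the labels via a bijection $\phi' : \{0, 1, \ldots, b-1\} \to \{0, 1, \ldots, b-1\}$ defined by $\phi'(A) := rc + q$, where each $A$ is uniquely written as $A = pq + r$ with $q \in \{0, \ldots, c-1\}$ and $r \in \{0, \ldots, p-1\}$; the range constraint $A \leq b - 1 = pc - 2$ forces $(q, r) \neq (c-1, p-1)$, and a direct check shows $\phi'$ is a bijection onto $\{0, \ldots, b-1\}$.

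The central step is to verify two equivalences. For $1 \leq i < n$, ``no d'-descent at $i$'' is equivalent to $\phi'(A_i) \leq \phi'(A_{i+1})$; and ``d'-descent at $n$'' is equivalent to $\phi'(A_n) \geq (p-1)c$, equivalently $\phi'(A_n) > b - c$. Since the ranks in the GSR representation satisfy $k_i < k_{i+1}$ iff $A_i \leq A_{i+1}$, the dash-order comparison $(k_i, r_i) <' (k_{i+1}, r_{i+1})$ reduces by definition to ``$r_i < r_{i+1}$'' or ``$r_i = r_{i+1}$ and $q_i \leq q_{i+1}$'', which is precisely the order on the $(r, q)$-lex values $rc + q$, i.e., on $\phi'$. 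For position $n$, $r_n = p - 1$ iff $\phi'(A_n) \in \{(p-1)c, \ldots, (p-1)c + c - 2\}$, i.e., $\phi'(A_n) \geq (p-1)c$.

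With these equivalences in hand, the remainder is a transcription of the lattice-path and Lindström--Gessel--Viennot argument from Theorem \ref{shuffle}. The sequence $(\phi'(A_1), \ldots, \phi'(A_n))$ splits at the positions $s_1, \ldots, s_k$ into $k+1$ weakly increasing blocks valued in $\{0, \ldots, b-1\}$; the constraint on the last block comes from the behavior at $n$: when $s_k < n$ the final block must end with $\phi'(A_n) \leq b - c$, and when $s_k = n$ the penultimate block must end strictly above $b - c$. This encodes a family of $k+1$ non-intersecting lattice paths on the lattice obtained by attaching to $\{0, \ldots, n-1\} \times \{0, \ldots, b-1\}$ an appendage at $x = n$ of height $b - c$ (in place of the height $c$ appendage used for Theorem \ref{shuffle}). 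Applying LGV with starts $P_i = (s_i, 0)$ and ends $Q_{i+1}$ then yields the determinantal formula, with entries $f'(i, j) = \binom{s_{j+1} - s_i + b - 1}{b - 1}$ and $\binom{n - s_i + b - c}{b - c}$ (or the binomial difference in the $s_k = n$ case).

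The main obstacle is the clean definition and verification of $\phi'$ together with its compatibility with the singleton condition at $i = n$; however, because the dash-order is straight lexicographic (color first, then position), this step is noticeably simpler than its counterpart for the usual order in Theorem \ref{shuffle}. Once $\phi'$ is in place, the lattice-path bijection and LGV application transfer essentially verbatim from the earlier proof, with only the appendage height $c$ replaced by $b - c$.
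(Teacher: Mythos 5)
Your proposal is correct and follows essentially the same route as the paper: recode the labels so that d'-descents become ordinary descents of a sequence in $\{0,\ldots,b-1\}$ with the boundary condition $X_n > b-c$ at position $n$, then apply the lattice-path bijection and the Lindstr\"om--Gessel--Viennot lemma on the lattice whose appendage at $x=n$ has height $b-c$. Your map $\phi'(pq+r)=rc+q$ satisfies $\phi'(A)\equiv cA \pmod b$ and is exactly the inverse of the paper's order-preserving bijection $x\mapsto px \pmod b$ (since $pc\equiv 1 \pmod b$), so the two recodings coincide; your direct verification of the descent correspondence at $i<n$ and at $i=n$ substitutes for the paper's appeal to its earlier work.
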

We next 
consider the uniform distribution on 
$G_{p, n}$.
Since 
$(b, n, p)$-shuffle
converges to the uniform distribution as 
$b \to \infty$, 
Theorems
\ref{shuffle}, \ref{shuffle2}, 
together with the equation 
$
\left(
\begin{array}{c}
s + b  \\ b
\end{array}
\right)
\stackrel{b \to \infty}{
=}
\dfrac {b^s}{s!}(1 + O(b^{-1}))$, 
yield the following corollary.
%
%
\begin{corollary}
\label{uniform}
\mbox{}\\
Let 
$s_1, \cdots, s_k$ 
with 
$0 = s_0 < s_1 < \cdots < s_k \le n$.
The 
probability 
$P_{unif}(s_1, s_2, \cdots, s_k)$
(resp. 
$P'_{unif}(s_1, s_2, \cdots, s_k)$)
of finding the d-descents 
(resp. d'-descents) 
at
$s_1, \cdots, s_k$ 
under the uniform distribution on 
$G_{p,n}$
are given respectively as follows. 
\\
(1)
d-descent : \\
(i)
$s_k < n$ : 
\beq
P_{unif}(s_1, \cdots, s_k)
&=&
\frac {1}{b^n}
\det
\Bigl(
g(i, j)
\Bigr)_{i, j=0, \cdots, k}
\\
\mbox{where}\quad
g(i, j)
&:=&
\begin{cases}
\dfrac {1}{(s_{j+1} - s_i) !}, 
&
( 0 \le i \le k, 
\;
0 \le j \le k-1 )
\\
\dfrac {1}{(s_{j+1} - s_i) !}
\cdot
\left(
\dfrac 1p
\right)^{s_{j+1} - s_i}, 
&
( 0 \le i \le k, 
\;
j = k )
\end{cases}
\eeq
(ii)
$s_k = n$ : 
\beq
P_{unif}(s_1, \cdots, s_{k-1}, n)
&=&
\frac {1}{b^n}
\det
\Bigl(
g(i, j)
\Bigr)_{i, j=0, \cdots, k-1}
\\
\mbox{where}\quad
g(i, j)
&:=&
\begin{cases}
\dfrac {1}{(s_{j+1} - s_i) !}, 
&
( 0 \le i \le k-1, 
\;
0 \le j \le k-2 )
\\
\dfrac {1}{(s_{j+1} - s_i) !}
\cdot
\left(
1 - 
\left(
\dfrac 1p
\right)^{s_{j+1} - s_i}
\right), 
&
( 0 \le i \le k-1, 
\;
j = k-1 )
\end{cases}
\eeq
(2)
d'-descent : 
let 
$p^*$
be the dual exponent of 
$p$ : 
$\dfrac 1p + \dfrac {1}{p^*} = 1$.
\\
$P'_{unif}(s_1, \cdots, s_k)$
satisfies the same formulas as for (1) except that 
$g(i,j)$ 
is replaced by 
$g'(i,j)$ 
given below : \\
(i)
$s_k < n$ : 
\beq
g'(i, j)
&:=&
\begin{cases}
\dfrac {1}{(s_{j+1} - s_i) !}, 
&
( 0 \le i \le k, 
\;
0 \le j \le k-1 )
\\
\dfrac {1}{(s_{j+1} - s_i) !}
\cdot
\left(
\dfrac {1}{p^*}
\right)^{s_{j+1} - s_i}, 
&
( 0 \le i \le k, 
\;
j = k )
\end{cases}
\eeq
(2)
$s_k = n$ : 
\beq
g'(i, j)
&:=&
\begin{cases}
\dfrac {1}{(s_{j+1} - s_i) !}, 
&
( 0 \le i \le k-1, 
\;
0 \le j \le k-2 )
\\
\dfrac {1}{(s_{j+1} - s_i) !}
\cdot
\left(
1 - 
\left(
\dfrac {1}{p^*}
\right)^{s_{j+1} - s_i}
\right), 
&
( 0 \le i \le k-1, 
\;
j = k-1 )
\end{cases}
\eeq
\end{corollary}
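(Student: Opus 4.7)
My strategy is to obtain Corollary \ref{uniform} from Theorems \ref{shuffle} and \ref{shuffle2} by sending $b \to \infty$ along $b = pc+1$ (for the d-descent case) and $b = pc-1$ (for the d'-descent case). Because the $(b,n,p)$-shuffle converges weakly to the uniform distribution on $G_{p,n}$, one has $\lim_{b\to\infty} P(s_1,\dots,s_k) = P_{\mathrm{unif}}(s_1,\dots,s_k)$ and similarly for the primed quantity, so it suffices to evaluate the $b \to \infty$ limit of each determinantal expression. In the two sequences $c/b \to 1/p$ and $(b-c)/b \to 1/p^*$, which is where the $1/p$ and $1/p^*$ factors in the statement originate.

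The analytic input is the stated asymptotic $\binom{s+b}{b} = b^s(1+O(b^{-1}))/s!$. Applied entry by entry it gives, for $0 \le j \le k-1$, $f(i,j) = b^{s_{j+1}-s_i}(1+O(b^{-1}))/(s_{j+1}-s_i)!$; and for the last column in the case $s_k<n$, $\binom{n-s_i+c}{c} = b^{n-s_i}(1+O(b^{-1}))/(p^{n-s_i}(n-s_i)!)$ (respectively with $p^*$ in the d'-case, using $b-c \sim b/p^*$). When $s_k=n$, the last-column entry is a difference of binomials whose leading $b^{n-s_i}/(n-s_i)!$ pieces do not cancel but combine as $b^{n-s_i}(1-(1/p)^{n-s_i})/(n-s_i)!$ (respectively with $1/p^*$).

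To pass the limit through the determinant, I extract by multilinearity a factor $b^{s_{j+1}}$ from column $j$ for $0 \le j \le k-1$, a factor $b^n$ from the last column (recall $s_{k+1}=n$, so in either case the last column effectively carries exponent $n$), and a factor $b^{-s_i}$ from row $i$. The accumulated factor from rows and columns is $\bigl(\prod_{j=0}^{k-1} b^{s_{j+1}}\bigr)\cdot b^n \cdot \prod_{i} b^{-s_i} = b^{s_1+\cdots+s_k+n-s_0-s_1-\cdots-s_k} = b^n$, which matches the $1/b^n$ prefactor of Theorems \ref{shuffle} and \ref{shuffle2}. The residual entries are exactly $g(i,j)$ or $g'(i,j)$ up to an $O(b^{-1})$ error that is uniform across the fixed-size matrix, so the determinant of the residual matrix converges to the determinant of $g(i,j)$ (or $g'(i,j)$) as $b \to \infty$.

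I do not expect a genuine obstacle: the argument is a routine asymptotic collapse, since the combinatorial content is already packaged inside Theorems \ref{shuffle} and \ref{shuffle2}. The only point that requires a moment of care is the $s_k=n$ column, where one must confirm that the two leading-order $b^{n-s_i}$ contributions subtract with the correct coefficient $1-(1/p)^{n-s_i}$ (or $1-(1/p^*)^{n-s_i}$) rather than cancelling outright; once this is observed the scaling argument and the passage to the limit go through without modification.
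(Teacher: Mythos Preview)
Your proposal is correct and follows essentially the same approach as the paper: the paper derives Corollary~\ref{uniform} in one sentence preceding the statement, by observing that the $(b,n,p)$-shuffle converges to the uniform distribution as $b\to\infty$ and invoking the asymptotic $\binom{s+b}{b}=\frac{b^s}{s!}(1+O(b^{-1}))$ in the determinantal formulas of Theorems~\ref{shuffle} and~\ref{shuffle2}. Your row/column factoring argument simply makes explicit the cancellation of the $b^n$ prefactor that the paper leaves implicit.
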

The main ingredient of proof of 
Theorem 
\ref{shuffle}
is to construct a bijection between the set of shuffles with given descent set and that of the non-intersecting lattice paths on 
${\cal L}$ 
mentioned in Remark 1.2 (2).
Then formulas 
in Theorems \ref{shuffle}, \ref{shuffle2}
follows from Lindestr\"om-Gessel-Viennot lemma \cite{L, GV}.
This 
bijection is the same as that between a generalized carries process called 
$(b, n, p)$-carries process, and the descent process of the random walk generated by 
$(b, n, p)$-shuffle, discussed in 
\cite{NS2}.
The contents
of later sections are outlined as follows.  
In section 2, 
we prove theorems \ref{shuffle}, \ref{shuffle2}.
In Appendix, 
we introduce 
$(b, n, p)$-carries process and discuss some of its properties to supplement our previous works \cite{NS1, NS2} : 
(i)  
a simplified proof for the diagonalization of the transition probability matrix, 
(ii) 
the relation between 
the eigenspace of the transition probability matrix for the random walk mentioned above and the right eigenvectors of that of 
$(b, n, p)$-carries process, 
and, 
(iii)
we study 
the convergence speed of this random walk and discuss the cut off phenomenon.
%
%
\section{Proof of Theorems}
We first 
give the proof for 
$b \equiv 1 \pmod p$
so that we set 
$b = pc + 1$, 
$c \in {\bf N}$.
The argument of 
proof for 
$b \equiv -1 \pmod p$
is similar, and we only mention the modifications in subsection 2.3.
%
\subsection{
$s_k < n$}
We first 
consider the case for 
$s_k < n$.
Our bijection 
between the set of 
$(b, n, p)$-shuffle 
with given position of descent 
and that of non-intersecting lattice paths on 
${\cal L}$
is a composition of a couple of order preserving bijections between some ordered sets. 
The overall view is : 
%
%
\beq
\Pi(s_1, \cdots, s_k)
\quad 
\stackrel{}{\longrightarrow} 
\quad
{\cal D}_b(s_1, \cdots, s_k)
\quad 
\stackrel{f}{\longrightarrow} 
\quad
\widetilde{{\cal D}_b}(s_1, \cdots, s_k)
\quad
\stackrel{GSR}{\to}
\quad
S(s_1, \cdots, s_k)
\eeq
where \\
(i) 
$\Pi (s_1, \cdots, s_k)$ : 
set of non-intersecting lattice paths on 
${\cal L}$
from 
$P_i$ 
to 
$Q_{i+1}$ 
($i=0, 1, \cdots, k$) 
each of which has miminal length.
\\
(ii)
${\cal D}_b(s_1, \cdots, s_k)$ : 
set of 
$n$ 
ordered elements in the ordered set
$({\cal D}(b), <)$
with descents at 
$s_1, \cdots, s_k$, 
(${\cal D}(b) := 
\{ 0, 1, \cdots, b-1 \}$)
\\
(iii)
$\widetilde{{\cal D}_b}(s_1, \cdots, s_k)$ : 
set of 
$n$ 
ordered elements in 
$({\cal D}(b), \prec)$
with another order 
$\prec$, 
with ``tilde-descent" at 
$s_1, \cdots, s_k$
(order 
$\prec$
and tilde-descent
are defined later).
\\
(iv)
$S(s_1, s_2, \cdots, s_k)$ : 
set of 
$(b, n, p)$-shuffle
with d-descents at 
$(s_1, \cdots, s_k)$.
\\
Once 
we construct this bijection, Theorem \ref{shuffle} follows directly from Lindestr\"om-Gessel-Viennot lemma
\cite{L, GV}.\\
%

\noindent
{\bf 
Orders and descents}\\
We introduce 
two notions of orders and corresponding descents below. 
\\
(1)
(normal ones)
\\
We say that 
${\bf X}
=
(X_1, X_2, \cdots, X_n)
\in
{\cal D}(b)^n$
has a {\bf descent} at 
$k \in [n]$
if and only if 
(i) 
$X_k > X_{k+1}$ 
(for 
$k=1, 2, \cdots,n-1$), 
(ii) 
$X_n > c$
(for 
$k=n)$.

We define 
the descent set of 
${\bf X} \in {\cal D}(b)^n$
and the set of 
${\bf X} \in {\cal D}(b)^n$
whose descent set coincides with the given positions 
$(s_1, \cdots, s_k)$, 
respectively. 
\beq
D({\bf X})
&:=&
\left\{
k  \in [n]
\, \middle| \, 
{\bf X}
\mbox{
has a descent at }
k
\right\}, 
\quad
{\bf X} \in {\cal D}(b)^n
\\
{\cal D}_b (s_1, \cdots, s_k)
&:=&
\left\{
{\bf X} \in {\cal D}(b)^n
\, \middle| \,
D({\bf X}) = (s_1, \cdots, s_k)
\right\}, 
\quad
1 \le s_1 < s_2 < \cdots < s_k \le n
\eeq
\noindent
(2)
(tilde order and tilde descent)
\\
We consider another order 
$\prec$ 
on 
${\cal D}(b)$.
We write
$x = (j,r)$ 
if 
$x = jp + r \in {\cal D}(b)$, 
where 
$r = 0, 1, \cdots, p-1$ 
and 
$j = 
\begin{cases}
0, 1, \cdots, c-1 & (r=1, 2, \cdots, p-1) \\
0, 1, \cdots, c & (r=0) 
\end{cases}
$
and set 
\beq
&&
(0,0) \prec (1,0) \prec \cdots \prec (c-1,0) \prec(c,0) \prec \\
&&
(0, p-1) \prec (1, p-1) \prec \cdots \prec (c-1, p-1) \prec
\\
&&
(0, p-2) \prec (1, p-2) \prec \cdots \prec (c-1, p-2) \prec
\\
&&
\prec \cdots \prec 
\\
&&
(0, 1) \prec (1, 1) \prec \cdots \prec (c-1, 1)
\eeq
%
\begin{remark}
The motivation 
to consider this order is the fact that the corresponding descent set of 
${\bf A} = (A_1, \cdots, A_n) \in {\cal D}(b)^n$
coincides with the d-descent set of 
$\pi [{\bf A}]$
\cite{NS2}.
\end{remark}
%
We say that 
${\bf A} = (A_1, \cdots, A_n) \in {\cal D}(b)^n$
has a 
{\bf tilde-descent}
at 
$k \in [n]$ 
if and only if 
\noindent
(i)
$A_k \succ A_{k+1}$
(for 
$k=1, 2, \cdots, n-1$), 
(ii)
$A_n \not\equiv 0 \pmod p$
(for 
$k=n$).
The descent set 
and 
that of 
${\bf A}$'s 
with tilde-descents at given positions 
are defined by 
\beq
\widetilde{D}({\bf A})
&:=&
\left\{
k \in [n]
\, \middle| \, 
{\bf A}
\mbox{
has a tilde-descent at 
}
k \in [n]
\right\}, 
\quad
{\bf A} \in {\cal D}(b)^n
\\
\widetilde{\cal D}_b (s_1, \cdots, s_k)
&:=&
\left\{
{\bf A} \in {\cal D}(b)^n
\, \middle| \,
\widetilde{D}({\bf A}) = (s_1, \cdots, s_k)
\right\}, 
\quad
1 \le s_1 < s_2 < \cdots < s_k \le n
\eeq
\noindent
{\bf Order-preserving bijections}\\
We construct 
the order-preserving bijections between those sets defined above. 
\\
(1)
$\Pi (s_1, \cdots, s_k)
\to
{\cal D}_b(s_1, \cdots, s_k)$ : 
\\
For an element in 
$\Pi (s_1, \cdots, s_k)$, 
we set the corresponding element in 
${\cal D}(b)^n$, 
for instance, \\
\noindent
{\bf Example 2 }
$p = 3$, $c = 2$, 
$b = 3 \cdot 2 + 1 = 7$, 
$n = 5$, 
$k = 2$, 
$s_1 = 2$, 
$s_2 = 4$ : 

\vspace*{1em}

\begin{figure} [H]
\begin{center}
\begin{tikzpicture} [ scale=0.8 ]
\begin{scope} [ xshift = 0 cm ] {
%
 \foreach \x in {0,1,...,4}{
 \draw (\x, 0) -- (\x,6);
 }
 \foreach \y in {0,1,...,6}{
 \draw (0,\y) -- (4,\y);
 }
 \foreach \k in {0,1, 2}{
 \draw (4,\k) -- (5,\k);
 }
 \draw (5,0) -- (5,2);

 \draw[red,->,>=latex,line width=1.5] (0,0) -- (0,3) -- (1,3) -- (1, 5) -- (2, 5) --(2, 6) ;
 \draw[red,->,>=latex,line width=1.5] (2,0) -- ++(0,2) --++(1,0) --++(0,2) --++(1,0) --++(0,2);
 \draw[red, ->, >=latex,line width=1.5] (4,0) -- ++(0,1) --++(1, 0) --++(0,1) ;

\draw ( 0.5, 3.3) node {3} ; 
\draw ( 1.5, 5.3) node {5} ; 
\draw ( 2.5, 2.3) node {2} ; 
\draw ( 3.5, 4.3) node {4} ; 
\draw ( 4.5, 1.3) node {1} ; 

 \foreach \x in {0, 2, 4}{
 \draw[fill=red] (\x, 0) circle (0.2);
 }
 \foreach \x in {2, 4}{
 \draw[fill=red] (\x, 6) circle (0.2);
 }
 \draw[fill=red] (5, 2) circle (0.2);
 \draw (0, -0.3) node[below]{$P_0$};
 \draw (2, -0.3) node[below]{$P_1$};
 \draw (4, -0.3) node[below]{$P_2$};
\draw (2, 6.3) node[above]{$Q_1$};
 \draw (4, 6.3) node[above]{$Q_2$};
 \draw (5, 2.3) node[above]{$Q_3$};

\draw (7, 3) node {$\Longleftrightarrow$} ; 
\draw (10, 3) node {${\bf X} = (3, 5, 2, 4, 1)$} ; 

%
}
\end{scope}
\end{tikzpicture}
\end{center}
\caption{
Bijection between 
$\Pi (2,4)$ 
and 
${\cal D}_7 (2,4)$
}
\end{figure}
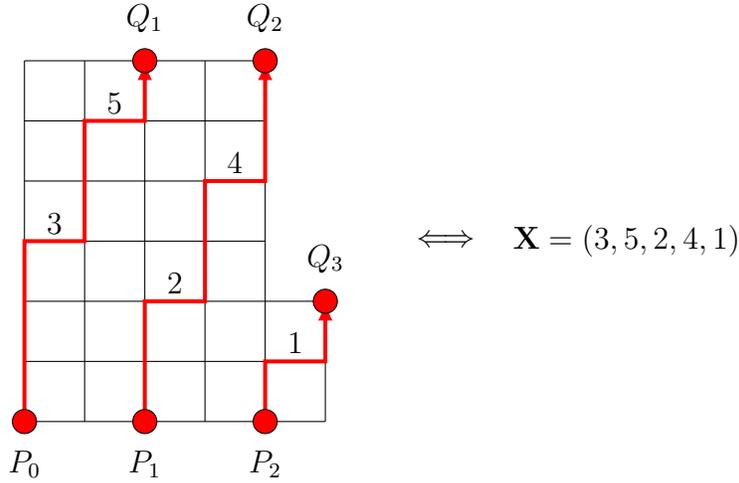
%

In general, 
for each lattice path 
we pick up the edges of horizontal direction and write down the heights of them. 
By definition
this is a bijection between 
$\Pi (s_1, \cdots, s_k)$
and 
${\cal D}_b (s_1, \cdots, s_k)$.
\\
(2)
${\cal D}_b (s_1, \cdots, s_k)$
$\to$
$\widetilde{\cal D}_b (s_1, \cdots, s_k)$ : 
Let 
\beq
f (x) \equiv px \pmod b.
\eeq
Then 
\cite{NS2}
$f$ 
is a bijection on 
${\cal D}(b)$ 
and 
(i)
$x < y$ 
if and only if 
$f(x) \prec f(y)$, 
and 
(ii)
$x > c$
if and only if 
$f (x) \not\equiv 0 \pmod p$.
Therefore 
$f$
is a bijection between 
${\cal D}_b (s_1, \cdots, s_k)$
and 
$\widetilde{\cal D}_b (s_1, \cdots, s_k)$.
\\
(3)
$\widetilde{\cal D}_b (s_1, \cdots, s_k)$
$\to$
$S(s_1, \cdots, s_k)$ : 
by Remark 2.1, 
GSR representation gives this bijection.
\\
\noindent
{\bf Example 2 (continued) }
As an example, 
we derive the  
$(b, n, p)$-shuffle 
corresponding to the lattice paths in Example 2.
The corresponding element in 
${\cal D}_b (s_1, s_2)$
is 
${\bf X}= 
(3, 5, 2, 4, 1)$
and other steps are given below, all of which have descents at 2, 4. 
\beq
\begin{array}{c} 
3 \\
\textcolor{red}{5} \\
2 \\
\textcolor{red}{4} \\
1 \\
\end{array}
\quad
\stackrel{\times 3 \pmod 7}
{\longrightarrow}
\quad
\begin{array}{c}
2 \\
\textcolor{red}{1} \\
6 \\
\textcolor{red}{5} \\
3
\end{array}
\quad
\stackrel{GSR}
{\longrightarrow}
\quad
\begin{array}{c}
(2,2) \\
\textcolor{red}{(1,1)} \\
(5,0) \\
\textcolor{red}{(4,2)} \\
(3,0)
\end{array}
\eeq
\begin{remark}
Borodin-Diaconis-Fulman 
\cite{BDF} 
considered the case for 
$p=1$ 
and proved Theorem \ref{shuffle}(1) in this case without using LGV lemma. 
Their argument 
can also be exploited in our situation and yields that the number of elements in 
${\cal D}_b (s_1, \cdots, s_k)$
is equal to the determinant given in 
Theorem \ref{shuffle}(1).
\end{remark}
%
%
\subsection{
$s_k = n$}
The bijections 
we made in the previous subsection are effective also in this case except that of 
$\Pi(s_1, \cdots, s_k) \to {\cal D}_b(s_1, \cdots, s_k)$, 
so that it suffices to consider a relation between 
$\Pi(s_1, \cdots, s_k)$ 
and 
${\cal D}_b (s_1, \cdots, s_k)$.
\beq
{\cal D}_b (s_1, \cdots, s_{k-1}, n)
&=&
\left\{
{\bf X} \in {\cal D}(b)^n 
\, \middle| \,
{\bf X}
\mbox{
has desents at }
s_1, \cdots, s_{k-1}
\mbox{ and } 
X_n \ge c+1
\right\}
\\
&=:&
S_1 \setminus S_2
\\
where 
\quad
S_1
&:=&
\left\{
{\bf X} \in {\cal D}(b)^n 
\, \middle| \,
{\bf X}
\mbox{ has descents at }
s_1, \cdots, s_{k-1}
\right\}
\\
S_2 
&:=& 
\left\{
{\bf X} \in {\cal D}(b)^n 
\, \middle| \,
{\bf X}
\mbox{
has desents at }
s_1, \cdots, s_{k-1}
\mbox{ and } 
X_n \le c
\right\}
\eeq
We note that 
$P(S_1) 
=
P(s_1, \cdots, s_{k-1})$ 
for 
$p=1$
(that is, 
$P(S_1)$
is equal to the probability of finding descents at 
$s_1, \cdots, s_k$ 
for 
$(b, n, 1)$-shuffle) 
and 
$P(S_2) = P(s_1, \cdots, s_{k-1}$) 
both of which are computed by Theorem \ref{shuffle}(1). 
Therefore 
the probability in question satisfies the same determinantal formula except that the size of which is now reduced to 
$k \times k$ 
and that the last column is replaced by 
\beq
\left( \begin{array}{c}
n-s_{i} + b-1 \\ b-1
\end{array} \right)
-
\left( \begin{array}{c}
n-s_{i} + c \\ c
\end{array} \right), 
\quad
0 \le i \le k-1
\eeq
%

%
\subsection{$b \equiv -1 \pmod p$
}
The essential ingredient of the proof for 
$b \equiv -1 \pmod p$ 
is the same as that for the previous case but we modify the definition of descent, tilde-order, and tilde-descent. 
\\
(1)'
(normal ones)
\\
We say that 
${\bf X}
=
(X_1, X_2, \cdots, X_n)
\in
{\cal D}(b)^n$
has a {\bf descent'}
at 
$k \in [n]$ 
if and only if 
(i)  $X_k > X_{k+1}$
($k=1, 2, \cdots,n-1$), 
and 
(ii) 
$X_n > b-c$. 
The corresponding 
descent' set 
and the set of 
${\bf X}$'s 
with given descent' are defined respectively by 
\beq
D'({\bf X})
&:=&
\left\{
k  \in [n]
\, \middle| \, 
{\bf X}
\mbox{ has a descent' at }
k \in [n]
\right\}, 
\quad
{\bf X} \in {\cal D}(b)^n
\\
{\cal D}'_b (s_1, \cdots, s_k)
&:=&
\left\{
{\bf X} \in {\cal D}(b)^n
\, \middle| \,
D'({\bf X}) = (s_1, \cdots, s_k)
\right\}
\eeq
\noindent
(2)'
(tilde' order and tilde' descent)
\\
We denote by 
$x = (j, r) \in {\cal D}(b)$ 
if
\beq
x = jp + r \in {\cal D}(b), 
\quad
j = 
\begin{cases}
0, 1, \cdots, c-1 & (r=0, 1,  \cdots, p-2) \\
0, 1, \cdots, c-2 & (r=p-1) 
\end{cases}
\eeq
and we set an order 
$\prec'$ 
on 
${\cal D}(b)$
as follows.
\beq
&&
(0,0) \prec' (1,0) \prec' \cdots \prec' (c-1,0) \prec' 
\\
&&
(0, 1) \prec' (1, 1) \prec' \cdots \prec' (c-1, 1) \prec
\\
&&
(0, 2) \prec' (1, 2) \prec' \cdots \prec' (c-1, 2) \prec
\\
&&
\prec' \cdots \prec' 
\\
&&
(0, p-1) \prec' (1, p-1) \prec' \cdots \prec' (c-2, p-1)
\eeq
We say that 
${\bf A} = (A_1, \cdots, A_n) \in {\cal D}(b)^n$
has 
{\bf tilde'-descent}
at 
$k \in [n]$
if and only if 
(i)
$A_k \succ' A_{k+1}$
($k=1, 2, \cdots, n-1$), 
and 
(ii)
$A_n \equiv p-1 \pmod p$.
\\
The tilde'-descent set and the set of 
${\bf A}$'s 
with given tilde'-descent set are defined by 
\beq
\widetilde{D}'({\bf A})
&:=&
\left\{
k \in [n]
\, \middle| \, 
{\bf A}
\mbox{ has a tilde'-descent at }
k \in [n]
\right\}, 
\quad
{\bf A} \in {\cal D}(b)^n
\\
\widetilde{\cal D}'_b (s_1, \cdots, s_k)
&:=&
\left\{
{\bf A} \in {\cal D}(b)^n
\, \middle| \,
\widetilde{D}'({\bf A}) = (s_1, \cdots, s_k)
\right\}
\eeq
Since 
$\sigma \in G_{p, n}$
has d'-descent at 
$n$ 
if and only if 
the corresponding 
${\bf X}\in {\cal D}_b (s_1, \cdots, s_k)$
satisfies 
$X_n > b-c$, 
the subgraph 
${\cal L}$ 
of the square lattice is replaced by 
${\cal L}'$ 
given below.
\beq
{\cal L'}
=
\bigl(
\{0, 1, \cdots, n-1 \}
\times
\{0, 1, \cdots, b-1 \}
\bigr)
\cup
\{(n, j) 
\, | \, 
j = 0, 1, \cdots, b-c \}
\eeq
and thus, if
$s_k = n$, 
the last column in the determinant is replaced by 
\beq
\left( \begin{array}{c}
n-s_{i} + b-1 \\ b-1
\end{array} \right)
-
\left( \begin{array}{c}
n-s_{i} + b-c \\ b-c
\end{array} \right), 
\quad
0 \le i \le k-1
\eeq
%

\section{Appendix}
%
%
\subsection{Generalized Carries Process}
In this subsection 
we consider a generalized carries process studied in \cite{NS1, NS2} called 
$(b, n, p)$-carries process, 
and present a simple way for diagonalizing the transition probability matrix.
To discuss two cases 
($p=1$
or 
$p \ne 1$)
simultaneously, we introduce the following parameter. 
\beq
\ell 
:=
\ell(p)
=
\left\{
\begin{array}{cc}
n & (p \ne 1) \\
n-1 & (p = 1)
\end{array}
\right.
\eeq
Let 
$p \in {\bf Q} \cap [1, \infty)$  
and 
${\cal C}(p)
:=
\{ 0, 1, \cdots, \ell(p) \}$.
$( \pm b, n, p)$-carries process
are the Markov chains on 
${\cal C}(p)$
whose random mapping representations are given below respectively. 
\begin{definition}
\mbox{}\\
(1)
$(+b,n,p)$-carries process : 
let 
$b = pc + 1 \in {\bf N}$,
$c \in {\bf N}$.
Given the carry 
$C^+_k \in {\cal C}(p)$
from the preceding digit, 
choose 
$X_1, \cdots, X_n \in {\cal D}(b)$ 
uniformly at ramdom, so that the carry 
$C^+_{k+1} \in {\cal C}(p)$
to the next digit is determined by the following equation. 
\beq
C^+_{k} + X_1 + \cdots + X_n + 
\frac {b-1}{p^*}
=
C^+_{k+1} b + r, 
\quad
r \in {\cal D}(b)
\eeq
where 
$\dfrac 1p + \dfrac {1}{p^*} = 1$. \\
(2)
$(-b,n,p)$-carries process : 
let 
$b = pc - 1 \in {\bf N}$
and 
$c \in {\bf N}$.
Given the carry 
$C^-_{k} \in {\cal C}(p)$
from the preceding digit, choose 
$X_1, \cdots, X_n \in {\cal D}(b)$
uniformly at random, so that the carry
$C^-_{k+1} \in {\cal C}(p)$ 
to the next digit is given by the following equation. 
\beq
C^-_{k} + X_1 + \cdots + X_n + 
\frac {b+1}{p}-1
=
\bigl(
n-C^-_{k+1}
\bigr) b + r, 
\quad
r \in {\cal D}(b).
\eeq
\end{definition}
The origin of those processes is as follows ; 
if 
we consider the base $(\pm b)$
-expansion of integers using 
${\cal D}(b ; a)
:=
\{ a, a+1, \cdots, a+b-1 \}$
($a \in {\bf Z}$) 
as the digit set, then after some change of variables 
the corresponding carries processes become 
$( \pm b, n, p)$-carries process \cite{NS1, NS2}.
Let 
$P_c^{\pm}
:=
\{
P_c^{\pm}(i,j)
\}_{i, j=0, 1, \cdots, \ell }$, 
$P_c^{\pm}(i, j)
:=
P(C^{\pm}_{k+1}=j
\, | \, 
C^{\pm}_k = i)$
be the transition probability matrix.
Then 
\begin{eqnarray}
P_c^{\pm}(i,j)
&=&
b^{-n}
[x^{A^{\pm}(i,j)}]
\left(
\frac {1 - x^b}{1-x}
\right)^{n+1}, 
\label{P}
\\
A^+(i,j) 
&:=&
j b - i + \frac {b-1}{p}, 
\quad
A^- (i,j)
:=
-i + (n-j+1)b  - \frac {b+1}{p}
\nonumber
\end{eqnarray}
where we denote by 
$[x^k]f$
the coefficient of 
$x^k$
in the power series 
$f$.
Moreover
we set 
$(\ell+1) \times (\ell+1)$-matrices
$V= \{ v_{ij} \}_{i, j=0, \cdots, \ell }$, 
$U= \{u_{ij} \}_{i, j=0, \cdots, \ell }$
whose elements are given respectively by 
\beq
v_{ij}
&:=&
[x^j]
\Bigl(
(1 - x)^{n+1} 
A_{n-i}(x)
\Bigr),
\quad
\mbox{where}
\quad
A_m (x) := 
\sum_{k=0}^{\infty}
(p k +1)^m x^k
\\
u_{ij}
&:=&
[ x^{n-j} ]
\left(
\begin{array}{c}
n + \frac {x-1}{p} - i \\ n
\end{array}
\right).
\eeq
Further, let 
$U = ({\bf u}_0, \cdots, {\bf u}_{\ell})$, 
$V = 
\left(
\begin{array}{c}
{\bf v}_0 \\
\vdots \\
{\bf v}_{\ell}
\end{array}
\right)$
be the columm vector 
(resp. row vector)
representation of 
$U$
(resp. $V$).
We 
give a simple argument to diagonalize 
$P_c^{\pm}$ 
which uses the fact that the 
$N$-composition of 
$( \pm b, n, p)$-carries proceesses  
has the same distribution as that of 
$((\pm b)^N, n, p)$-carries process 
\cite{NS2}.
%
%
\begin{theorem}
%
\beq
(P_c^{\pm})^N
&=&
\sum_{k=0}^{\ell}
(\pm b)^{-kN} E_k, 
\quad
N=0, 1, \cdots
\eeq
where 
$E_k$ 
is a projection matrix given by 
\beq
E_k
=
| {\bf u}_k \rangle
\langle {\bf v}_k |, 
\quad
\mbox{ that is }
\quad
(E_k)_{ij}
:=
u_{ik} \cdot v_{kj}.
\eeq
Therefore 
$P_c^{\pm}$ 
is diagonalizable and the eigenvalues are 
$\{ (\pm b)^{-k} \}_{k=0}^{\ell}$
respectively with right and left eigenvectors being 
${\bf u}_k$, 
${\bf v}_k$
for both processes. 
\end{theorem}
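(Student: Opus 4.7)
The plan is to reduce the $N$-step identity to a one-step spectral formula for an arbitrary valid base, which can then be verified by a direct generating-function computation. By the composition property recalled from \cite{NS2}, $(P_c^{\pm})^N$ equals the transition matrix of the $((\pm b)^N, n, p)$-carries process; since the vectors ${\bf u}_k$, ${\bf v}_k$ depend only on $n$ and $p$, it suffices to establish the one-step identity
\[
P_c^{\pm}[B](i,j) \;=\; \sum_{k=0}^{\ell} (\pm B)^{-k}\, u_{ik}\, v_{kj}
\]
for every valid base $B$, and then substitute $B = b^N$ with the appropriate sign to recover the theorem.

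To prove the one-step identity in the ``$+$'' case (the ``$-$'' case is analogous, using that the signed base $-b$ also satisfies $-b \equiv 1 \pmod p$ when $b=pc-1$), expand \eqref{P} via the binomial theorem $(1-x^B)^{n+1}=\sum_l(-1)^l\binom{n+1}{l}x^{lB}$ together with $[x^m](1-x)^{-(n+1)}=\binom{n+m}{n}$, yielding
\[
P_c^{+}[B](i,j) \;=\; B^{-n}\sum_{l\ge 0}(-1)^l \binom{n+1}{l}\binom{n + (j-l)B + c - i}{n}.
\]
On the right-hand side of the proposed identity, set $F_i(x):=\binom{n+(x-1)/p-i}{n}$ so that $\sum_k u_{ik}\, y^{n-k}=F_i(y)$, and interchange this sum with the series $A_m(y)=\sum_r(pr+1)^m y^r$ entering the definition of $v_{kj}$; this gives
\[
\sum_k B^{-k}\, u_{ik}\, v_{kj} \;=\; B^{-n}\,[y^j]\,(1-y)^{n+1}\sum_{r\ge 0} y^r\, F_i\bigl(B(pr+1)\bigr).
\]
The crucial algebraic identity $\bigl(B(pr+1)-1\bigr)/p \;=\; rB + c$, valid when $B=pc+1$, collapses $F_i\bigl(B(pr+1)\bigr)$ to $\binom{n+rB+c-i}{n}$, and extracting the coefficient of $y^j$ against $(1-y)^{n+1}$ then matches the two sums term by term.

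Once the one-step formula is in hand, the full theorem follows immediately: substituting $B=(\pm b)^N$ and invoking composition produces $(P_c^{\pm})^N = \sum_k (\pm b)^{-kN} E_k$; the $N=0$ case yields $\sum_k E_k = I$; and distinctness of the $(\ell+1)$ eigenvalues $\{(\pm b)^{-k}\}_{k=0}^{\ell}$ forces the $E_k$ to be the unique spectral projections, so in particular $E_j E_k = \delta_{jk} E_k$, and ${\bf u}_k$, ${\bf v}_k$ are right and left eigenvectors for $(\pm b)^{-k}$. The main obstacle is the generating-function matching: one has to recognise that the rather opaque definitions of $u_{ik}$ (via the polynomial $F_i$) and of $v_{kj}$ (via the finite-difference expression $(1-y)^{n+1}A_{n-k}(y)$) are precisely tailored so that the substitution $x \mapsto B(pr+1)$ reproduces the binomial coefficient appearing in \eqref{P}, after which the whole identity drops out.
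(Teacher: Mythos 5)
Your treatment of the $(+b)$ case is correct and is essentially the paper's own argument: both expand $P_c^{+}(i,j)$ as the convolution $\sum_r [x^r](1-x)^{n+1}\binom{n+A^{+}(i,j)-br}{n}$, observe that $A^{+}(i,j)-br=\frac{b\{p(j-r)+1\}-1}{p}-i$ so that the binomial coefficient is the polynomial $\binom{n+\frac{x-1}{p}-i}{n}=\sum_k u_{ik}x^{n-k}$ evaluated at $x=b\{p(j-r)+1\}$ (your identity $(B(pr+1)-1)/p=rB+c$ is exactly this step), and then recognize the remaining $r$-sum as $v_{kj}$. The reduction of the $N$-step statement to the one-step identity via $(\pm b,n,p)^{\circ N}\stackrel{d}{=}((\pm b)^N,n,p)$, and the $N=0$ case via $B=1$, are also the paper's steps; your closing remark that distinctness of the eigenvalues plus $\sum_k E_k=I$ forces $E_jE_k=\delta_{jk}E_k$ is a clean way to finish. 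One small point you (and the paper) elide: $\binom{n+A-br}{n}$ must be read throughout as $[x^{A-br}](1-x)^{-(n+1)}$, i.e.\ as $0$ whenever $A-br<0$; it is this convention that kills the terms with $r>j$ and lets the $r$-sum be identified with $v_{kj}=[x^j]\bigl((1-x)^{n+1}A_{n-k}(x)\bigr)$, whose defining series has no negative powers.

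The gap is in the $(-b)$ case, which you dismiss as ``analogous, using that $-b\equiv 1\pmod p$.'' That substitution does not work literally. The $(-b,n,p)$-process is defined by $C^-_k+\sum_m X_m+\frac{b+1}{p}-1=(n-C^-_{k+1})b+r$, so $A^-(i,j)=-i+(n+1-j)b-\frac{b+1}{p}$ carries a built-in reflection $j\mapsto n+1-j$ that is not produced by setting $B=-b$ in the $(+)$ formula. Running your computation on $A^-$, the support condition on the binomial coefficient becomes $j+r\le\ell$, and after the substitution $s=n+1-r$ the surviving range is $s\ge j+1$ (indeed $s\ge j+2$ when $p=1$) --- the \emph{complement} of the range $s\le j$ needed to read off $v_{kj}$. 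The paper closes this by invoking $\sum_{s=0}^{n+1}[x^s](1-x)^{n+1}\{p(j-s)+1\}^{n-k}=0$ for $k\le n$ (the $(n+1)$-st finite difference of a polynomial of degree at most $n$ vanishes), which converts the indicator $1(j+1\le s)$ into $-1(s\le j)$ and, together with the accumulated factors $(-1)^{n+1}$ and $(-1)^{n-k}$, produces the eigenvalues $(-b)^{-k}$ with the correct signs; a separate one-line check is needed at $p=1$, $s=j+1$. None of this is ``the same computation with $B=-b$,'' so as written your proof establishes the theorem only for $P_c^{+}$.
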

\begin{proof}

%
%
By definition, 
$P_c^{\pm}(i,j)$
is equal to 
\begin{eqnarray}
P_c^{\pm}(i,j)
&=&
b^{-n}
\sum_r
[x^{b r}]
(1 - x^{b})^{n+1}
\cdot
[ x^{A^{\pm} (i,j) - br}]
\frac {1}{(1 - x)^{n+1}}
\nonumber
\\
&=&
b^{-n}
\sum_r
[x^r]
(1- x)^{n+1} 
\cdot
\left(
\begin{array}{c}
n + A^{\pm}(i,j) - b r \\ n
\end{array}
\right).
\label{sharp}
\end{eqnarray}
(1)
$(+b)$-case : 
we have
\begin{equation}
A^+(i,j) - b r
=
\frac {
b 
\left\{
p(j-r) + 1
\right\}
-1
}
{p}
- i.
\label{flat}
\end{equation}
since 
$A^+ (i, j) - b r \in {\bf Z}$, 
in order that the binomial coefficient in RHS in 
$(\ref{sharp})$ 
is nonzero, it is necessary that 
$b 
\left\{
p (j-r) + 1 
\right\} -1 
\ge 0$
which is equivalent to 
$j - r \ge 
- \dfrac {b-1}{p} \cdot \dfrac 1b
=
-c\cdot \dfrac 1b$.
Since 
$c = \dfrac {b-1}{p} \in 
\{ 1, 2, \cdots, b-1 \}$ 
which implies 
$-1 \le -c\cdot \dfrac 1b \le 0$, 
this condition is equivalent to 
$r \le j$. 
By the definition of 
$u_{ij}$ 
and by 
(\ref{flat}), 
\beq
\left(
\begin{array}{c}
n + A(i,j) - b r \\ n
\end{array}
\right)
&=&
\sum_k
u_{ik}
\left(
b 
\bigl\{
p(j-r) + 1
\right\}
\bigr)^{n-k}
\cdot
1(r \le j). 
\eeq
Substituting 
this equation to 
(\ref{sharp})
yields 
\begin{equation}
P_c^{+}(i,j)
=
\sum_k
b^{-k}
u_{ik}
\sum_{r=0}^j
[x^r]
(1-x)^{n+1}
\cdot
[x^{j-r}]
A_{n-k}(x)
=
\sum_k
b^{-k}
u_{ik}
\cdot
v_{kj}.
\label{natural}
\end{equation}
Since
$N$-composition of 
$( \pm b, n, p)$-carries 
has the same distribution as that of 
$((\pm b)^N, n, p)$-carries process 
\cite{NS2}, 
replacing 
$b$
by 
$b^N$
in 
(\ref{natural}) 
yields the expression for 
$\bigl(
(P_c^{\pm})^N
\bigr)_{ij}$
for 
$N \ge 1$.
For 
$N = 0$, 
we put 
$b = 1$ 
in 
(\ref{natural})
and note that LHS is equal to 
$\delta_{ij}$.
\\
(2)
$(-b)$-case : 
we have
\begin{equation}
A^-(i, j) - br
=
\frac {
\left\{
p (n+1 - j - r) - 1
\right\} b - 1
}
{p}
- i.
\label{spadesuit}
\end{equation}
In order that 
the binomial coefficient in 
(\ref{sharp}) 
is nonzero, it is necessary that 
$\left\{
p (n+1 - j - r) - 1
\right\} b - 1
\ge 0$
which is equivalent to 
\begin{equation}
n +1 - j - r \ge 
\frac {b+1}{p}
\cdot \frac 1b
=
\frac cb. 
\label{clubsuit}
\end{equation}
Noting that 
$c \in \{1, 2, \cdots, b\}$
for 
$p>1$
and 
$c = b+1$ 
for 
$p=1$, 
(\ref{clubsuit}) 
is equivalent to the condition  
$j+r \le \ell$.
By definition of 
$u_{ij}$, 
and by 
(\ref{spadesuit}), 
we have
\beq
\left(
\begin{array}{c}
n + A^-(i, j) - br \\ n
\end{array}
\right)
=
\sum_k
u_{ik}
\bigl(
\left\{
p(n+1 - j - r) - 1
\right\}
b \bigr)^{n-k}
\cdot
1 (j + r \le \ell )
\eeq
and substituting this equation into 
(\ref{sharp}) 
and change of variables 
$s = n+1  - r$ 
yields
\beq
&&
P_c^-(i, j)
\\
&=&
b^{-n}
\sum_s
[x^{n+1-s}](1-x)^{n+1}
\sum_k
u_{ik}
[
\left\{
p (s-j) - 1
\right\}
b]^{n-k} 
\cdot
1(j + n+1 -s \le \ell)
\\
&=&
b^{-n}
\sum_s
(-1)^{n+1}
[ x^s ] 
(1- x)^{n+1}
\sum_k
u_{ik}
\left\{
p (j-s) + 1
\right\}^{n-k} 
(-1)^{n-k}
b^{n-k}
\cdot
1(j+1 \le \ell - n + s)
\\
&=&
-
\sum_k
(-b)^{-k} u_{ik}
\sum_s
[x^s]
(1-x)^{n+1} 
\cdot
\left\{
p (j-s) + 1
\right\}^{n-k} 
1\left(
\begin{cases}
j+1 \le s & (p \ne 1) \cr
j+2 \le s & (p = 1 ) \cr
\end{cases}
\right).
\eeq
When 
$p = 1$
and 
$s = j + 1$, 
we have 
$\left\{
p (j-s) + 1
\right\}^{n-k} 
(-1)^{n-k} = 0$
so that the indicator function
in RHS 
may be simplified to 
$1(j+1 \le s)$ 
for both cases. 
Therefore for any 
$p$, 
\begin{equation}
P_c^-(i,j)
=
-
\sum_k
(-b)^{-k} u_{ik}
\sum_s
[x^s]
(1-x)^{n+1} 
\left\{
p (j-s) + 1
\right\}^{n-k} 
1\left(
j+1 \le s  
\right).
\label{diamondsuit}
\end{equation}
Since we have 
\beq
\sum_{s=0}^{n+1}
[x^s]
(1-x)^{n+1}
\left\{
p (j-s) + 1
\right\}^{n-k}
= 0, 
\quad
k = 0, 1, \cdots, n, 
\eeq
the indicator function 
$1( j+1 \le s )$ 
in RHS in 
(\ref{diamondsuit}) 
may be replaced by 
$- 1(s \le j)$.
Hence we have 
\beq
P_c^-(i,j)
&=&
\sum_k
(-b)^{-k} u_{ik}
\sum_s
[x^s]
(1-x)^{n+1} 
\left\{
p (j-s) + 1
\right\}^{n-k} 
1\left(
s \le j 
\right)
=
\sum_k
(-b)^{-k}
u_{ik} v_{kj}. 
\eeq
The rest 
of the proof is the same as that of $(+b)$-case.
\QED
\\
\end{proof}

Next, 
we present a result on the relation between the 
$(\pm b, n, p)$-carries process 
and 
$(b, n, p)$-shuffle 
\cite{NS2}, 
which is a generalization of those in 
\cite{DF1, DF2}
\footnote{
we would like to take this opportunity to correct the statement in \cite{NS2}
for 
$(-b)$-case. 
}.
Let 
$S$ 
be the 
$(b, n, p)$-shuffle
and let 
$R : 
\sigma = \{ (\sigma (i), \sigma_c (i)) \}_{i=1}^n 
\mapsto 
\sigma' :=
\{ (\sigma (i), p-\sigma_c (i)) \}$
be the color-reversing map.
Let 
$\{ \sigma(k) \}_{k=0}^{\infty}$
be the random walk on 
$G_{p, n}$
by applying the 
$(b, n, p)$-shuffle 
repeatedly to the identity : 
\beq
\sigma (k) := S^k \circ \sigma (0), 
\quad
\sigma (0) := id 
\in G_{p, n}. 
\eeq
Moreover, 
as a variant of that, let 
$\{ \sigma'(k) \}_{k=0}^{\infty}$
be another random walk on 
$G_{p,n}$ 
applying 
$R$ 
in even steps : 
\beq
\sigma'(k) 
:=
\begin{cases}
S 
\circ 
\sigma' (k-1) & (k : odd) 
\\
R 
\circ 
S 
\circ 
\sigma' (k-1) & (k : even) 
\end{cases}
\eeq
\begin{theorem}
{\bf \cite{NS2}}
Let 
$p \in {\bf N}$.\\
(1)
$b \equiv 1 \pmod p$ : 
\beq
\{ d(\sigma(k)) \}_{k = 0}^{\infty}
\stackrel{d}{=}
\{ C^+_{k} \}_{k = 0}^{\infty}.
\eeq
(2)
$b \equiv -1 \pmod p$ : 
let 
$d(k) 
:=
\begin{cases}
n - d' ( \sigma' (k) ) & (k : odd) \\
d (\sigma' (k) ) & (k : even) \\
\end{cases}$.
Then
$\{ d(k) \}$
is a Markov chain with 
\beq
\{ d(k) \}_{k=0}^{\infty}
\stackrel{d}{=}
\{ C^-_{k} \}_{k = 0}^{\infty}.
\eeq

\end{theorem}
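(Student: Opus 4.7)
Proof plan. Both processes start deterministically at $0$ (since $\sigma(0)=id$ has no descents and $C^\pm_0=0$ by convention), so it suffices to prove two things: that the descent counts form Markov chains, and that their one-step transitions coincide with those of $\{C^\pm_k\}$. I would base everything on the bijection ${\bf A}\mapsto\pi[{\bf A}]$ used in the proof of Theorem \ref{shuffle}, under which the d-descent (resp.\ d'-descent) count of $\pi[{\bf A}]$ equals the number of tilde-descents (resp.\ tilde'-descents) of ${\bf A}\in{\cal D}(b)^n$.

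For part (1) the key structural fact to establish is the semigroup identity for colored shuffles: the composition of a $(b_1,n,p)$-shuffle with a $(b_2,n,p)$-shuffle is distributed as a $(b_1b_2,n,p)$-shuffle, with the composite GSR label obtained from the individual ones by stacking base-$b$ digits. Iterating $k$ times identifies $\sigma(k)$ in distribution with a $(b^k,n,p)$-shuffle, so $d(\sigma(k))$ counts the tilde-descents (in the $b^k$-order) of a uniform element of ${\cal D}(b^k)^n$. Writing such an element in base $b$, I would then check that the tilde-descent structure of the composite depends on the fresh base-$b$ digits only through a single integer carried from the preceding step: summing the digits with the shift $\tfrac{b-1}{p^*}$ produces, modulo $b$, exactly the tilde-descent indicator, and the quotient is precisely $C^+_{k+1}$ in the definition of the $(+b,n,p)$-carries process. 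This gives both the Markov property and the matching transitions simultaneously.

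For part (2), $b^2\equiv 1\pmod p$, so two iterates of the shuffle fall under the mechanism of (1), but a single iterate does not. The color-reversing involution $R$ is introduced precisely so that $R\circ S$ plays the role of a ``$(-b,n,p)$-shuffle''; concretely, one verifies that $R$ exchanges d-descents and d'-non-descents via the identity $d'(R\sigma)=n-d(\sigma)$ (with the boundary conditions (ii) and (ii') handled carefully on the last coordinate). The alternating definition of $d(k)$ is then exactly the change of variables that turns the composite random walk $\{\sigma'(k)\}$ into a Markov chain on ${\cal C}(p)$, and the base-$b$ carry argument of (1), applied with base $-b$ and shift $\tfrac{b+1}{p}-1$, yields the recursion defining $\{C^-_k\}$.

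The main obstacle is the combinatorial bookkeeping that identifies the carry produced by digitwise addition of ${\cal D}(b^k)^n$-elements (with the prescribed shift) with the tilde-descent count of the composite. This relies on the twist $x\mapsto px\bmod b$ between the natural and tilde orders used in Section 2.1, and requires tracking how the order $\prec$ refracts through the base-$b$ expansion position by position. In case (2) the color-reversing involution adds a further subtlety: one must verify that $R$ genuinely reverses the tilde'-order in the way required for $R\circ S$ to behave as a $(-b)$-shuffle on the nose, and it is precisely this sign/boundary handling that the authors' footnote flags as a correction to the statement of the $(-b)$-case in \cite{NS2}.
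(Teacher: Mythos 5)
The paper does not prove this theorem: it is quoted verbatim from \cite{NS2} (with a footnote correcting the $(-b)$-case), so there is no in-paper argument to compare against. Your plan for part (1) is the standard Diaconis--Fulman-style route that \cite{NS2} actually follows -- the composition law $(b,n,p)\circ(b,n,p)\stackrel{d}{=}(b^2,n,p)$, the identification of $d(\pi[{\bf A}])$ with the tilde-descent count of ${\bf A}$, and the matching of the tilde-descent count of the composite label with the carry in digitwise base-$b$ addition shifted by $\tfrac{b-1}{p^*}$ -- and, modulo the bookkeeping you yourself flag, it is sound.

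Part (2), however, contains a concretely false step. You propose to verify $d'(R\sigma)=n-d(\sigma)$, i.e.\ that color reversal exchanges d-descents with d$'$-non-descents. It does not: the color negation $r\mapsto p-r$ carries the ranking $0<p-1<p-2<\cdots<1$ underlying the order $<$ onto the ranking $0<'1<'\cdots<'p-1$ underlying $<'$, and preserves the position ordering within each color class, so it is an order \emph{isomorphism} $(\Sigma,<)\to(\Sigma,<')$. Consequently $R$ \emph{preserves} every internal descent ($\sigma$ has a d-descent at $i<n$ iff $R\sigma$ has a d$'$-descent at $i$), and the two statistics can differ only through the boundary condition at $i=n$ (where $\sigma_c(n)\neq 0$ versus $\sigma_c(n)=1$); hence $|d'(R\sigma)-d(\sigma)|\le 1$, nowhere near $n-d(\sigma)$. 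The complementation $n-d'$ in the theorem is not produced by $R$ at all: it comes from the orientation reversal intrinsic to a single $b\equiv-1$ shuffle step (visible in the defining equation of the $(-b)$-carries process, where the carry enters as $n-C^-_{k+1}$, and in the fact that the analogue of the map $x\mapsto px \bmod b$ is order-\emph{reversing} when $p\equiv b^{-1}\equiv -c$). The role of $R$ is only to restore the composition law, since $(b_1,n,p)\circ(b_2,n,p)\stackrel{d}{=}R\circ((b_1b_2),n,p)$ when $b_2\equiv-1$. As written, your verification in part (2) would fail at exactly the point you try to carry it out, and this is precisely the sign/boundary issue the authors' footnote is correcting in \cite{NS2}; you would need to relocate the source of the $n$-complementation before the argument closes.
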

%

%
\subsection{Some Relation between
$(b, n, p)$-carries process and 
$(b, n, p)$-shuffle}
In this subsection 
we assume 
$b \equiv 1 \pmod p$ 
and 
we present a result that the eigenspace and its multiplicity of the transition probability matrix 
$P_s$ 
of 
$\{ \sigma (k) \}$
can be represented by that of the right eigenvectors of 
$(b, n, p)$-carries process.
This fact 
has been discussed in several papers 
(e.g., \cite{DF3}) 
for 
$p=1$.
In 
$\S 3.2.3$, 
we briefly state corresponding results for 
$b \equiv -1 \pmod p$.
%

\subsubsection{Decent Algebra}
We 
collect some preparatory facts on the descent algebra.
The following fact 
is the 
$G_{p,n}$-version 
of Gessel's lemma, proved in 
\cite{NS2}.
\begin{lemma}
\label{Gessel}
For each 
$\sigma \in G_{p, n}$, 
let 
\beq
c_{ij}^{\sigma}
:=
\sharp \left\{
(\mu, \tau)
\, \middle| \, 
d(\mu) = i, \; d(\tau) = j, \; \mu \tau = \sigma
\right\}, 
\quad
i, j = 0, 1, \cdots, \ell 
\eeq
(1)
The generating function of 
$\{ c_{ij}^{\sigma} \}_{i, j}$
is given by 
\beq
\sum_{i, j \ge 0}
c_{ij}^{\sigma}
\cdot
\frac {s^i}{(1 - s)^{n+1}}
\frac {t^j}{(1 - t)^{n+1}}
=
\sum_{a, b \ge 0}
\left(
\begin{array}{c}
n + pab + a + b - d(\sigma) \\ n
\end{array}
\right)
s^a t^b.
\eeq
(2)
$d(\sigma_1) = d(\sigma_2)$
implies
$c_{ij}^{\sigma_1} = c_{ij}^{\sigma_2}$
so that for each 
$k \in \{0,1, \cdots, \ell \}$ 
we take 
$\sigma\in G_{p, n}$
with 
$d(\sigma) = k$
and set
\beq
c_{ij}^k := c_{ij}^{\sigma}.
\eeq
(3)
$c_{ij}^k = c_{ji}^k$.
\end{lemma}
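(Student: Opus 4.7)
The plan is to derive the lemma from two facts about the $(b,n,p)$-shuffle established in \cite{NS2}: (a) for $b=pc+1$, the probability that the $(b,n,p)$-shuffle outputs a particular $\sigma\in G_{p,n}$ is
\[P_b(\sigma)=\frac{1}{b^n}\binom{n+c-d(\sigma)}{n},\]
depending on $\sigma$ only through its descent number; and (b) composing a $(b_1,n,p)$-shuffle with a $(b_2,n,p)$-shuffle produces a $(b_1b_2,n,p)$-shuffle.

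First I would set $b_1=pa+1$ and $b_2=pb+1$ so that $b_1b_2=p(pab+a+b)+1$, and write $P_{b_1b_2}(\sigma)$ in two ways: directly by (a), and as the convolution $\sum_{\mu\tau=\sigma}P_{b_1}(\mu)P_{b_2}(\tau)$ guaranteed by (b). Grouping the factorizations $\mu\tau=\sigma$ by the pair $(d(\mu),d(\tau))$ and cancelling the factor $(b_1b_2)^n$ yields the polynomial identity
\[\binom{n+pab+a+b-d(\sigma)}{n}=\sum_{i,j\ge 0}c_{ij}^{\sigma}\binom{n+a-i}{n}\binom{n+b-j}{n}\qquad(a,b\ge 0).\]
Multiplying through by $s^a t^b$ and summing over $a,b\ge 0$, together with the elementary identity $\sum_{a\ge 0}\binom{n+a-i}{n}s^a=s^i/(1-s)^{n+1}$ (valid for $0\le i\le n$, since the binomial coefficient vanishes for $0\le a<i$), converts this into precisely the generating-function identity of assertion (1).

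Assertion (2) then follows immediately: the right-hand side of (1) depends on $\sigma$ only through $d(\sigma)$, and the coefficients $c_{ij}^{\sigma}$ are uniquely recoverable from the bivariate rational function on the left, since the family $\{s^i t^j/((1-s)(1-t))^{n+1}\}_{0\le i,j\le n}$ is linearly independent. Assertion (3) is obtained by observing that $pab+a+b$ is symmetric in $(a,b)$, so the right-hand side of (1) is invariant under $s\leftrightarrow t$; by the same uniqueness this forces $c_{ij}^{\sigma}=c_{ji}^{\sigma}$.

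The substantive content sits in fact (a), namely that the number of GSR arrays mapping to each $\sigma$ depends only on $d(\sigma)$ and equals $\binom{n+c-d(\sigma)}{n}$; this is the $G_{p,n}$-analogue of the Bayer--Diaconis count, proved in \cite{NS2} via the same kind of lattice-path/coset argument used for $p=1$. Once (a) and the composition property (b) are granted, the rest of the argument is purely formal manipulation of binomial generating functions and presents no additional obstacle.
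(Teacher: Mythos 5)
The paper itself gives no proof of this lemma; it is quoted from \cite{NS2}. Your overall strategy --- obtain the identity by writing the probability of the $(b_1b_2,n,p)$-shuffle in two ways, once directly and once as a convolution via the composition property, and then pass to generating functions --- is the standard route to Gessel-type lemmas and is surely the intended one, and your handling of the generating-function bookkeeping, the recovery of (2) by linear independence of $\{s^it^j/((1-s)(1-t))^{n+1}\}$, and of (3) from the $s\leftrightarrow t$ symmetry is fine.

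There is, however, one concrete error in your input fact (a). As the paper's own Lemma 3.9(1) records, the probability of obtaining $\sigma$ from the $(b,n,p)$-shuffle is $b^{-n}\binom{n+c-d(\sigma^{-1})}{n}$, i.e.\ it depends on the descent number of $\sigma^{-1}$, not of $\sigma$ (this is the same inverse that appears in Bayer--Diaconis via rising sequences; $d(\sigma)\ne d(\sigma^{-1})$ in general). With the corrected formula, the convolution identity you write down does not immediately produce the coefficients $c_{ij}^{\sigma}$, which are defined through $d(\mu)$ and $d(\tau)$: grouping the factorizations $\mu\tau=\sigma$ by $(d(\mu^{-1}),d(\tau^{-1}))$ gives the numbers $\sharp\{(\mu,\tau)\mid \mu\tau=\sigma,\ d(\mu^{-1})=i,\ d(\tau^{-1})=j\}$, and you need the additional bijection $(\mu,\tau)\mapsto(\tau^{-1},\mu^{-1})$, which identifies these with $c_{ji}^{\sigma^{-1}}$, together with the symmetry of $\binom{n+pab+a+b-d}{n}$ under $a\leftrightarrow b$, to arrive at the identity of part (1) (stated for $\sigma^{-1}$, which suffices since $\sigma\mapsto\sigma^{-1}$ is a bijection of $G_{p,n}$ preserving nothing you need beyond this). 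This is a repairable slip rather than a fatal one, but as written the key displayed identity does not follow from the facts you invoke.
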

We 
consider the descent algebra and its inverse involution  : 
\beq
D_i 
&:=&
\sum_{d (\sigma) = i} \sigma
\in
{\bf C}[ G_{p, n} ], 
\quad
\Theta_i
:=
\sum_{d(\sigma^{-1}) = i}
\sigma
\in
{\bf C}[ G_{p, n} ], 
\quad
i = 0, 1, \cdots, \ell.
\eeq
Then by Lemma \ref{Gessel}
\begin{lemma}
\label{DescentAlgebra}
(1)
$D_i D_j = \sum_k c_{ij}^k D_k$
(2)
$D_i D_j = D_j D_i$
(3)
$\{ \Theta_i \}$
satisfy the same properties.
\end{lemma}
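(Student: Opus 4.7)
The plan is to prove the three parts in order, using Lemma \ref{Gessel} as the main input; the work is essentially bookkeeping in the group algebra ${\bf C}[G_{p,n}]$.

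For (1), I would expand the product directly:
\begin{eqnarray*}
D_i D_j
&=& \sum_{d(\mu)=i}\sum_{d(\tau)=j} \mu\tau
= \sum_{\sigma \in G_{p,n}} \Bigl( \sharp\{(\mu,\tau):d(\mu)=i,\;d(\tau)=j,\;\mu\tau=\sigma\}\Bigr)\,\sigma \\
&=& \sum_{\sigma} c_{ij}^{\sigma}\,\sigma .
\end{eqnarray*}
By Lemma \ref{Gessel}(2), $c_{ij}^{\sigma}$ depends only on $k=d(\sigma)$, so I can group the terms according to the common value of $d(\sigma)$ and pull the coefficient out, obtaining $D_iD_j=\sum_k c_{ij}^{k} D_k$.

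For (2), I simply apply Lemma \ref{Gessel}(3): $c_{ij}^{k}=c_{ji}^{k}$, hence
$$D_iD_j=\sum_k c_{ij}^k D_k=\sum_k c_{ji}^k D_k=D_jD_i.$$

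For (3), the clean approach is to introduce the $\mathbf{C}$-linear antiautomorphism $*:{\bf C}[G_{p,n}]\to {\bf C}[G_{p,n}]$ defined by $\sigma\mapsto \sigma^{-1}$ and extended linearly. A direct check shows $D_i^{*}=\Theta_i$: rewriting $D_i=\sum_{d(\sigma)=i}\sigma$ and substituting $\tau=\sigma^{-1}$ gives $D_i^{*}=\sum_{d(\tau^{-1})=i}\tau=\Theta_i$. Since $*$ reverses products,
\begin{eqnarray*}
\Theta_j\Theta_i \;=\; D_j^{*}D_i^{*} \;=\; (D_iD_j)^{*} \;=\; \Bigl(\sum_k c_{ij}^{k}D_k\Bigr)^{*} \;=\; \sum_k c_{ij}^{k}\Theta_k,
\end{eqnarray*}
which, together with the symmetry $c_{ij}^k=c_{ji}^k$, yields both $\Theta_i\Theta_j=\sum_k c_{ij}^k\Theta_k$ and $\Theta_i\Theta_j=\Theta_j\Theta_i$.

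There is no real obstacle here: the heavy lifting has already been done in Lemma \ref{Gessel}, whose symmetry properties (2) and (3) make all three statements formal consequences. The only small point requiring care is the identification $D_i^{*}=\Theta_i$, which hinges on correctly tracking the descent of $\sigma$ versus that of $\sigma^{-1}$ in the defining sums of $D_i$ and $\Theta_i$.
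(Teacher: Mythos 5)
Your proof is correct and is essentially the argument the paper intends: the paper gives no explicit proof, simply asserting the lemma "by Lemma \ref{Gessel}," and your expansion of $D_iD_j$, the use of parts (2) and (3) of that lemma, and the transfer to $\{\Theta_i\}$ via the inversion antiautomorphism $\sigma\mapsto\sigma^{-1}$ (with $D_i^{*}=\Theta_i$) is the standard way to fill in those details.
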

%

\subsubsection{Eigenvalues and eigenspaces of the 
$(b, n, p)$-shuffle}
We define 
$e_0, e_1, \cdots, e_{\ell} \in {\bf C}[ G_{p, n} ]$
by the following equation. 
\begin{equation}
S:=
\sum_{\sigma \in G_{p,n}}
P( 
(b,n,p)\mbox{-shuffle}=\sigma
)
\,
\sigma
=
\frac {1}{b^n}
\sum_{i = 0}^{\ell} 
\left(
\begin{array}{c}
n + \frac {b-1}{p} - i \\ n
\end{array}
\right)
\Theta_i
=
\sum_{k=0}^{\ell} 
b^{-k} e_{\ell-k}.
\label{S}
\end{equation}
In other words, 
$e_{\ell - k}
:=
[b^{n-k}] 
(b^n S)$.
And 
we also define 
$f_0, f_1, \cdots, f_{\ell} \in {\bf C}[ G_{p, n} ]$
as the inverse counterparts : 
%
%
%
\beq
T:=
\sum_{\sigma \in G_{p,n}}
P
\Bigl(
\bigl(
(b,n,p)\mbox{-shuffle}
\bigr)^{-1}=\sigma
\Bigr)
\,
\sigma
=
\frac {1}{b^n}
\sum_{i = 0}^{\ell}
\left(
\begin{array}{c}
n + \frac {b-1}{p} - i \\ n
\end{array}
\right)
D_i
=
\sum_{k=0}^{\ell}
b^{-k} f_{\ell-k}.
\eeq
The 
folowing proposition is essentially due to 
\cite{Miller}, 
which implies that 
$\{ \Theta_i \}$ 
and 
$\{ e_{ \ell - k } \}$
are related via the right eigenvectors of 
$P_c^{\pm}$.
\begin{proposition}
\beq
(1)\quad
\Theta_i = \sum_{k=0}^{\ell} 
v_{ki} e_{\ell-k}, 
\quad
(2)\quad
e_{\ell-k} = \sum_{i=0}^{\ell} u_{ik} \Theta_i
\eeq
Similarly, 
\beq
(3)\quad
D_i = \sum_{k=0}^{\ell} 
v_{ki} f_{\ell-k}, 
\quad
(4)\quad
f_{\ell-k} = \sum_{i=0}^{\ell} u_{ik} D_i
\eeq
\end{proposition}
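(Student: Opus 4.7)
The plan is to read off (2) and (4) directly from the generating-function identity defining $e_{\ell-k}$ and $f_{\ell-k}$, and then to invert using the completeness relation $UV = I$ to obtain (1) and (3).

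First I would expand $\binom{n + \frac{b-1}{p} - i}{n}$ as a polynomial in $b$. By the very definition of $u_{ij}$, namely $u_{ij} = [x^{n-j}]\binom{n + \frac{x-1}{p} - i}{n}$, and since $\binom{n + \frac{x-1}{p} - i}{n}$ is a polynomial of degree $n$ in $x$, substituting $x = b$ gives the key identity
\begin{equation*}
\binom{n + \tfrac{b-1}{p} - i}{n} = \sum_{k=0}^{\ell} u_{ik}\, b^{n-k}, \qquad i = 0, 1, \ldots, \ell.
\end{equation*}
For $p \ne 1$ (so $\ell = n$) this uses all $n+1$ coefficients of the polynomial; for $p = 1$ (so $\ell = n-1$) the constant term $\binom{n-i-1}{n}$ vanishes for every $i \in \{0, \ldots, n-1\}$, so only the $n$ coefficients of $b^1, \ldots, b^n$ survive and the identity still holds with $k = 0, \ldots, \ell$.

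Next, substituting this expansion into the defining equation (\ref{S}) of $S$ yields
\begin{equation*}
b^n S = \sum_{i=0}^{\ell} \binom{n + \tfrac{b-1}{p} - i}{n}\, \Theta_i = \sum_{k=0}^{\ell} b^{n-k} \Bigl( \sum_{i=0}^{\ell} u_{ik}\, \Theta_i \Bigr).
\end{equation*}
Comparing this with the other expansion $b^n S = \sum_{k=0}^{\ell} b^{n-k}\, e_{\ell-k}$ and equating coefficients of the polynomial in $b$ produces (2). Replacing $S$ by $T$ and $\Theta_i$ by $D_i$ throughout, exactly the same calculation gives (4).

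Finally, to deduce (1) I would invert using $UV = I$, which is the completeness relation already contained in Theorem 3.2: setting $N = 0$ in $(P_c^{\pm})^N = \sum_{k=0}^{\ell} (\pm b)^{-kN} E_k$ yields $I = \sum_{k=0}^{\ell} E_k = \sum_k |{\bf u}_k\rangle\langle {\bf v}_k|$, i.e., $\sum_k u_{ik} v_{kj} = \delta_{ij}$. Since $U$ is square, this forces also $\sum_k v_{ki} u_{jk} = \delta_{ij}$. Applying this to (2),
\begin{equation*}
\sum_{k=0}^{\ell} v_{ki}\, e_{\ell-k} = \sum_{j=0}^{\ell} \Bigl( \sum_{k=0}^{\ell} v_{ki}\, u_{jk} \Bigr) \Theta_j = \sum_{j=0}^{\ell} \delta_{ij}\, \Theta_j = \Theta_i,
\end{equation*}
which is (1). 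The identical inversion of (4) gives (3). The only subtle point — really a bookkeeping obstacle rather than a substantive one — is verifying that the polynomial identity in Step 1 is valid on the full range $k = 0, \ldots, \ell$, which in the case $p = 1$ depends on the automatic vanishing of the constant term $\binom{n-i-1}{n}$ for $i \in \{0, \ldots, n-1\}$. Once that is checked, the entire proof reduces to the duality $V = U^{-1}$ between the two natural bases of the relevant $(\ell+1)$-dimensional subspace of ${\bf C}[G_{p,n}]$.
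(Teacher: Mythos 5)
Your proposal is correct and follows essentially the same route as the paper: expand the binomial coefficient as a polynomial in $b$ via the definition of $u_{ik}$, compare coefficients of $b^{n-k}$ in the defining identity for $S$ to get (2), and invert with $V = U^{-1}$ (the $N=0$ case of the diagonalization theorem) to get (1), with (3) and (4) identical for $T$ and $D_i$. Your explicit check that the constant term $\binom{n-1-i}{n}$ vanishes when $p=1$ is a detail the paper leaves implicit, but it is not a different argument.
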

\begin{proof}
By eq.
(\ref{S}) 
we have 
\beq
\sum_{i = 0}^{\ell} 
\left(
\begin{array}{c}
n + \frac {b-1}{p} - i \\ n
\end{array}
\right)
\Theta_i
=
\sum_{k=0}^{\ell}
b^{n-k} e_{\ell-k}
\eeq
which is a polynomial in 
$b$.
Taking the coefficient of 
$b^{n-k}$ 
on both sides yields 
(2).
(1)
follows from 
$V = U^{-1}$.
(3) and  (4) 
follow similarly.
\QED
\end{proof}
Now 
we have a representation of the eigenspace of the 
transition probability matrix of 
$\{ \sigma (k) \}$.
\begin{theorem}
\label{eigenspace}
\mbox{}\\
(1)
Let 
$P_s = \{ P_s (\mu, \tau) \}_{\mu, \tau}$,  
$P_s (\mu, \tau) :=
{\bf P}
\left(
\sigma(k+1) = \tau 
\, \middle| \, 
\sigma(k) = \mu 
\right)$, 
$\mu, \tau \in G_{p,n}$ 
be the transition probability matrix of 
$\{ \sigma (k) \}$.
Then 
the set of eigenvalues of 
$P_s$ 
is 
$\{ b^{-j} \}_{j=0}^{\ell}$ 
and with left (resp. right) eigenspace being  
Range $L(e_{ \ell-j })$
(resp. Range $L(f_{\ell - j})$), 
where 
$L(A)$ 
is the left regular representation of 
$A \in {\bf C}(G_{p, n})$.
\\
(2) 
The multiplicity of the eigenvalue 
$b^{-j}$ 
is equal to 
$u_{0j} p^n n!$, 
which is equal to the Stirling - Frobenius cycle number.
\end{theorem}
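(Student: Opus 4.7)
The plan is to realize $P_s$ as an operator on the group algebra ${\bf C}[G_{p,n}]$. Since $\sigma(k+1)=S\circ\sigma(k)$, a direct convolution computation shows that $P_s$ acting on row-vector distributions coincides with left multiplication by $S$, while $P_s$ acting on column-vector functions coincides with left multiplication by $T=\sum_\sigma P(S^{-1}=\sigma)\sigma=\sum_k b^{-k}f_{\ell-k}$. The eigenstructure of $P_s$ is therefore controlled by the multiplicative structure of $S$ and $T$ in ${\bf C}[G_{p,n}]$.

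The main step, and the main obstacle, is to show that $\{e_{\ell-k}\}_{k=0}^{\ell}$ are mutually orthogonal idempotents in ${\bf C}[G_{p,n}]$ (and similarly for $\{f_{\ell-k}\}$). The key input is the semigroup property of the generalized shuffle: the $N$-fold convolution of the $(b,n,p)$-shuffle coincides with the $(b^N,n,p)$-shuffle, so $S(b)^N=S(b^N)$ in ${\bf C}[G_{p,n}]$ for all $N\ge 1$. Substituting $S(b)=\sum_k b^{-k}e_{\ell-k}$ and setting $t=b^{-1}$ yields the formal polynomial identity
\[
\left(\sum_{k=0}^{\ell}t^k e_{\ell-k}\right)^{\!N} \;=\; \sum_{k=0}^{\ell}t^{Nk}e_{\ell-k}
\qquad (N\ge 1)
\]
in ${\bf C}[G_{p,n}][t]$, valid because it holds for infinitely many $t=1/b$. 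Since the $e_{\ell-k}$ pairwise commute (they span the same subalgebra as the $\Theta_i$, commutative by Lemma \ref{DescentAlgebra}(3)), comparing coefficients of $t^m$ on both sides and performing induction on $m$ (using $N=2,3,\ldots$ as needed) extracts the relations $e_{\ell-j}e_{\ell-k}=\delta_{jk}e_{\ell-j}$. The same argument, applied to $T$ (which inherits the semigroup property $T(b)^N=T(b^N)$ via the anti-involution $\sigma\mapsto\sigma^{-1}$), gives $f_{\ell-j}f_{\ell-k}=\delta_{jk}f_{\ell-j}$.

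With orthogonal idempotency in hand, $L(S)=\sum_k b^{-k}L(e_{\ell-k})$ and $L(T)=\sum_k b^{-k}L(f_{\ell-k})$ are sums of mutually orthogonal projections on ${\bf C}[G_{p,n}]$. Hence the eigenvalues of $P_s$ are exactly $\{b^{-j}\}_{j=0}^{\ell}$, with left eigenspace $\mathrm{Range}\,L(e_{\ell-j})$ and right eigenspace $\mathrm{Range}\,L(f_{\ell-j})$ for the eigenvalue $b^{-j}$.

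For part (2), the multiplicity of $b^{-j}$ equals the rank of the projection $L(e_{\ell-j})$, which in the regular representation equals $\mathrm{tr}\,L(e_{\ell-j})=|G_{p,n}|\cdot[e_{\ell-j}]_{\mathrm{id}}$, where $[\cdot]_{\mathrm{id}}$ denotes the coefficient of the identity of $G_{p,n}$. By the preceding proposition, $e_{\ell-j}=\sum_i u_{ij}\Theta_i$; since the identity permutation has $d(\mathrm{id})=d(\mathrm{id}^{-1})=0$, it appears with coefficient $1$ only in $\Theta_0$, so $[e_{\ell-j}]_{\mathrm{id}}=u_{0j}$ and the multiplicity is $p^n n!\,u_{0j}$. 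The identification of this number with the Stirling-Frobenius cycle number is a separate combinatorial computation.
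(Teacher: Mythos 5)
Your proposal is correct and follows essentially the same route as the paper: both rest on the semigroup identity $S(b)^N=S(b^N)$ from Lemma \ref{composition}, which yields $S^N=\sum_{k}(b^{-k})^N e_{\ell-k}$ and hence the spectral decomposition of $L(S)$ (and of $L(T)$ for the right eigenspaces), and both obtain the multiplicity as $\mathrm{tr}\,L(e_{\ell-j})=u_{0j}\,|G_{p,n}|$ using $\mathrm{tr}\,L(\Theta_i)=|G_{p,n}|\,1(i=0)$. You merely make explicit two steps the paper leaves implicit, namely the identification of $P_s$ with left multiplication by $S$ (resp.\ $T$) on the group algebra and the extraction of the orthogonal idempotent relations from the polynomial identity.
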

\begin{proof}
%
(1)
Since the 
$N$-composition of 
$(b, n, p)$-shuffles
has the same distribution as that of 
$(b^N, n, p)$-shuffle 
(Lemma \ref{composition}), 
we have
\begin{equation}
S^N
=
\dfrac {1}{b^{Nn}}
\sum_{i=0}^l
\left(
\begin{array}{c}
n + \dfrac {b^N-1}{p}-i \\ n
\end{array}
\right)
\Theta_i
=
\sum_{k=0}^l 
\left(
\frac {1}{b^k}
\right)^N
e_{\ell - k}, 
\quad
N = 0, 1, \cdots, 
\label{heartsuit}
\end{equation}
The statement for the eigenvalues and left eigenspaces follows by taking the left regular representation of both sides of 
(\ref{heartsuit}).
For the right eigenspace, 
we consider the inverse shuffle
$T$.
\\
(2)
By 
(\ref{heartsuit}), 
$L(e_{\ell - j})$ 
is a projection so that  
$dim\, Range\, L(e_{\ell - j}) = trace \; L(e_{\ell - j})$.
Since
\beq
trace \; L(e_{\ell -j})
=
trace
\left(
\sum_{i=0}^l
u_{ij} L(\Theta_i)
\right)
=
\sum_{i=0}^l
u_{ij} 
trace \left[
L(\Theta_i)
\right], 
\eeq
and since 
$trace \left[
L(\Theta_i)
\right]
=
| G_{p, n} | 
1 ( i = 0 )$, 
%
%
%
we have 
\beq
\sum_{i=0}^l
u_{ij} 
trace \left[
L(\Theta_i)
\right]
=
u_{0j}
trace
\left(
L(\Theta_0)
\right)
=
u_{0j} p^n n!.
\eeq
\QED
\end{proof}
%
\subsubsection{
In the case of 
$b \equiv -1 \pmod p$}
In this subsection 
we set 
$b = pc - 1 \equiv -1 \pmod p$, 
$c \in {\bf N}$.
Then 
some of the previous discussion still work from which we conclude that the transition probability matrix 
$P_s$
can not be diagonalized.
Since 
the argument is parallel as that of previous ones, we omit the detail and state the results only.
We 
first give a formula for the probability to obtain a given 
$\sigma$
as 
$(b, n, p)$-shuffle.
As before
$R \circ \sigma$
is the reverse of colors of 
$\sigma \in G_{p, n}$. 
For simplicity, 
we write 
``$(b, n, p)$"
instead of 
``$(b, n, p)$-shuffle".
\begin{lemma}
Let 
$\sigma \in G_{p, n}$.
\\
(1)
$b =pc+1$
\beq
P(
(b,n,p)
=\sigma
)
=
b^{-n}
\left(
\begin{array}{c}
n + c - d(\sigma^{-1}) \\ n
\end{array}
\right).
\eeq
(2)
$b =pc-1$
\beq
P(
R \circ (b,n,p)
=\sigma
)
=
b^{-n}
\left(
\begin{array}{c}
n + c-1 - d'(\sigma^{-1}) \\ n
\end{array}
\right).
\eeq
\end{lemma}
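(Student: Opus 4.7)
Both formulas reduce to counting, for a given target $\sigma \in G_{p,n}$, the number
\[
N(\sigma) := \#\{\mathbf{A} \in \mathcal{D}(b)^n : \pi[\mathbf{A}] = \sigma\}
\]
of GSR label vectors producing $\sigma$, since each $\mathbf{A}$ is drawn uniformly and contributes $b^{-n}$. Case (1) is $P((b,n,p) = \sigma) = b^{-n} N(\sigma)$ with $b = pc + 1$. Case (2) follows from the identity $P(R \circ (b,n,p) = \sigma) = P((b,n,p) = R\sigma) = b^{-n} N(R\sigma)$ and rerunning the same count with $b = pc - 1$.

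\textbf{Reparametrization.} Set $B_j := A_{\sigma^{-1}(j)}$ for $j = 1, \ldots, n$. The GSR stable-sort rule says $\pi[\mathbf{A}] = \sigma$ exactly when $B_1 \le B_2 \le \cdots \le B_n$ in $\mathcal{D}(b)$, with a \emph{strict} $B_j < B_{j+1}$ whenever $\sigma^{-1}(j) > \sigma^{-1}(j+1)$ (otherwise a tie would swap the two indices in the sort), and with the color constraint $B_j \equiv -\sigma^{-1}_c(j) \pmod p$. Write $B_j = p C_j + r_j$ with $r_j := (-\sigma^{-1}_c(j)) \bmod p$ and $C_j \ge 0$.

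\textbf{Descents are strict jumps.} A short case analysis on the signs of $r_j - r_{j+1}$ and $\sigma^{-1}(j) - \sigma^{-1}(j+1)$ shows that the pair-constraint between $B_j$ and $B_{j+1}$ is equivalent to $C_j \le C_{j+1}$ in general, with strict $C_j < C_{j+1}$ iff $(r_j, \sigma^{-1}(j)) > (r_{j+1}, \sigma^{-1}(j+1))$ in natural lex order. The usual order on $\Sigma$, after the color relabeling $s \mapsto -s \bmod p$ that defines $r_j$, becomes exactly this lex order, so the strict jumps occur precisely at the interior d-descents of $\sigma^{-1}$. At the endpoint, $B_n \le b - 1 = pc$ gives $C_n \le c$ if $r_n = 0$ and $C_n \le c - 1$ if $r_n \ne 0$, and the tightening happens precisely when $\sigma^{-1}$ has a d-descent at $n$.

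\textbf{Stars-and-bars and Case (2).} The count of admissible $(C_1, \ldots, C_n)$ is now: weakly increasing, with a strict jump at each interior d-descent of $\sigma^{-1}$, top bound $c - \mathbf{1}(\text{d-descent at }n)$. The standard shift
\[
D_j := C_j - \#\{i < j : \sigma^{-1} \text{ has a d-descent at } i\}
\]
turns every strict inequality into a weak one and collapses the endpoint tightening, giving a bijection onto weakly increasing $0 \le D_1 \le \cdots \le D_n \le c - d(\sigma^{-1})$, whose count is $\binom{n + c - d(\sigma^{-1})}{n}$ by stars-and-bars. For Case (2), the same reindexing applied to $\rho = R\sigma$ yields $r_j = \sigma^{-1}_c(j)$, so the interior lex order is now natural on $(\sigma^{-1}_c(j), \sigma^{-1}(j))$, which is precisely the dash-order --- hence interior d'-descents of $\sigma^{-1}$. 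The endpoint analysis $B_n \le pc - 2$ gives $C_n \le c - 1$ unless $r_n = p - 1$, i.e.\ unless $\sigma^{-1}_c(n) = p - 1$, which is the d'-descent at $n$; the shift argument then delivers $\binom{n + c - 1 - d'(\sigma^{-1})}{n}$. The main bookkeeping obstacle is the case analysis linking the strict/weak inequalities among $C_j$'s to d- or d'-descents of $\sigma^{-1}$; the definitions of the usual order and the dash-order are tuned precisely so that these identifications work, after which stars-and-bars closes both cases.
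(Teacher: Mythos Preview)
Your argument is correct. The reparametrization $B_j=A_{\sigma^{-1}(j)}$, the identification of the strict/weak inequalities on the $C_j$'s with the interior d- (resp.\ d'-) descents of $\sigma^{-1}$ via the color relabeling $r_j$, the endpoint analysis, and the final stars-and-bars shift all check out; in particular the verification that $(R\sigma)^{-1}=R(\sigma^{-1})$, hence $r_j=(\sigma^{-1})_c(j)$ in Case~(2), is exactly what makes the dash-order appear.

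As for comparison with the paper: there is essentially nothing to compare against. The paper does not prove this lemma. Part~(1) is used implicitly in the definition of $S$ in \S3.2.2 (the coefficient $\binom{n+(b-1)/p-i}{n}$ in front of $\Theta_i$ is precisely your formula, taken as known from \cite{NS2}), and the full statement is recorded in \S3.2.3 under the blanket disclaimer ``we omit the detail and state the results only.'' So your write-up supplies a self-contained proof where the paper gives none. The method you use --- count GSR labels yielding a fixed $\sigma$ by passing to the sorted sequence $(B_j)$ and reducing to a weakly increasing sequence in a shorter range --- is the standard one (it is the colored analogue of the Bayer--Diaconis count for ordinary riffle shuffles) and is undoubtedly what the authors have in mind, but it is not written out in this paper.

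One small wording issue: in Case~(2) you write ``$C_n\le c-1$ unless $r_n=p-1$,'' which reads as if the bound \emph{fails} when $r_n=p-1$; you mean that it \emph{tightens} to $C_n\le c-2$ in that case. The subsequent shift computation shows you are using it correctly.
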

We 
next study the composition of 
$(b, n, p)$-shuffles. 
\begin{lemma}
\cite{NS2}
\label{composition}
Let 
$b_1, b_2 \equiv \pm 1 \pmod p$.
\beq
(b_1, n, p) \circ (b_2, n, p)
&\stackrel{d}{=}&
\begin{cases}
((b_1 \cdot b_2), n, p)  & (b_2 \equiv 1 \pmod p) \\
R \circ ((b_1 \cdot b_2), n, p) & (b_2 \equiv -1 \pmod p)
\end{cases}
\eeq
\end{lemma}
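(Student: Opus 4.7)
The plan is to construct an explicit measure-preserving bijection between the GSR label spaces of two independent shuffles and that of a single $(b_1 b_2, n, p)$-shuffle, and to track how it transports the permutation and color parts through composition in $G_{p,n}$. Concretely, let $A^{(1)} \in {\cal D}(b_1)^n$ and $A^{(2)} \in {\cal D}(b_2)^n$ be the independent uniform GSR-labels of the two shuffles, with $\sigma_j := \pi[A^{(j)}]$, and define
\[
\tilde{A}_i := b_2 \cdot A^{(1)}_{\sigma_2(i)} + A^{(2)}_i \in {\cal D}(b_1 b_2), \qquad i = 1, \ldots, n.
\]
The map $(A^{(1)}, A^{(2)}) \mapsto \tilde A$ is invertible (recover $A^{(2)}_i = \tilde A_i \bmod b_2$, reconstruct $\sigma_2$, then $A^{(1)}_j = \lfloor \tilde A_{\sigma_2^{-1}(j)}/b_2 \rfloor$), so it transports the product uniform measure to the uniform measure on ${\cal D}(b_1 b_2)^n$, i.e.\ to the GSR-law of the $(b_1 b_2, n, p)$-shuffle.

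Next I would check that $\pi[\tilde A]$ coincides with $\sigma_1 \circ \sigma_2$ as a permutation: the composite sort of $i$'s is lexicographic in the key $(A^{(1)}_{\sigma_2(i)}, A^{(2)}_i, i)$ --- primary is the $(b_1)$-label at the position reached after $\sigma_2$, secondary is the $(b_2)$-label at the original position, tertiary is the original index for tie-breaking --- which by the very choice of $\tilde A_i$ agrees with sorting by $\tilde A_i$ (ties broken by $i$). For the color part, the group law on $G_{p,n}$ gives $(\sigma_1 \circ \sigma_2)_c(i) \equiv A^{(1)}_{\sigma_2(i)} + A^{(2)}_i \pmod p$, while the GSR prescription gives $\pi[\tilde A]_c(i) \equiv b_2 A^{(1)}_{\sigma_2(i)} + A^{(2)}_i \pmod p$. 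When $b_2 \equiv 1 \pmod p$ the two expressions coincide, so $\pi[\tilde A] = \sigma_1 \circ \sigma_2$ as elements of $G_{p,n}$; pushing the uniform measure through yields the first case of the lemma.

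For $b_2 \equiv -1 \pmod p$ the color formula becomes $\pi[\tilde A]_c(i) \equiv -A^{(1)}_{\sigma_2(i)} + A^{(2)}_i \pmod p$, which differs from $(\sigma_1 \sigma_2)_c(i)$ by a position-dependent sign. My plan here is to precompose with an appropriate measure-preserving involution on the $A^{(2)}$ coordinates (for instance $A^{(2)}_i \mapsto b_2 - 1 - A^{(2)}_i$) so that, after this adjustment, the residual color discrepancy reduces to exactly the global color-reversal $R$, giving $\sigma_1 \circ \sigma_2 \stackrel{d}{=} R \circ \pi[\tilde A]$. This is the main obstacle: the modified bijection must still preserve both the uniform distribution and the lex-key sorting (so that the permutation identification survives), while converting the mod-$p$ sign flip induced by $b_2 \equiv -1$ into a single position-independent $R$ rather than a position-dependent correction. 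Once this correspondence is in place, the second case follows by pushing forward the uniform measure as in the $b_2 \equiv 1$ case.
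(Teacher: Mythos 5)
The paper does not prove this lemma---it is quoted from \cite{NS2}---so your argument has to stand on its own. The first half does: the merged label $\tilde A_i=b_2A^{(1)}_{\sigma_2(i)}+A^{(2)}_i$ is the standard Bayer--Diaconis bijection, the lexicographic key $(A^{(1)}_{\sigma_2(i)},A^{(2)}_i,i)$ correctly identifies the permutation part of $\sigma_1\circ\sigma_2$ with that of $\pi[\tilde A]$, and for $b_2\equiv 1\pmod p$ the congruence $\tilde A_i\equiv A^{(1)}_{\sigma_2(i)}+A^{(2)}_i\pmod p$ matches the color of the group product, whose law is $(\sigma_1\circ\sigma_2)_c(i)=(\sigma_1)_c(\sigma_2(i))+(\sigma_2)_c(i)$. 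That case is complete.

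The second case is not just ``the main obstacle''; as you have set it up it cannot be closed. First, the involution $A^{(2)}_i\mapsto b_2-1-A^{(2)}_i$ is order-reversing on ${\cal D}(b_2)$, so it replaces $\sigma_2$ by its reversal and destroys exactly the lex-key identification you rely on; since the only order-preserving bijection of ${\cal D}(b_2)$ is the identity, no relabeling of the secondary digit can both preserve the sort and shift the residues, and the leftover discrepancy $2A^{(2)}_i\bmod p$ is label-dependent, hence not absorbable into the single global map $R$. Second, and more seriously, under your reading of the composition (independent samples multiplied in $G_{p,n}$, colors adding) the claimed identity for $b_2\equiv-1$ is false outright, so no measure-preserving correspondence of any kind exists: take $p=3$, $n=1$, $b_1=b_2=2$, and write $\sigma_r$ for the element of $G_{3,1}\cong{\bf Z}_3$ with color $r$; a $(2,1,3)$-shuffle is $\sigma_0$ or $\sigma_1$ with probability $1/2$ each, so the product of two independent copies is $(\sigma_0,\sigma_1,\sigma_2)$ with probabilities $(1/4,1/2,1/4)$, whereas $R\circ(4,1,3)$ gives $(1/2,1/4,1/4)$. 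The lesson is that the composite colored shuffle intended in \cite{NS2} cannot be the group product with additively combined colors: in the merged GSR label the earlier shuffle's digit enters multiplied by $b_2$, so its contribution to the composite color is $b_2A^{(1)}_{\sigma_2(i)}\equiv-A^{(1)}_{\sigma_2(i)}\pmod p$ when $b_2\equiv-1$, and this uniform sign reversal (compare the walk $\sigma'(k)$, which interleaves $R$) is the mechanism behind the appearance of $R$ in the statement. You need to start from the precise definition of the composite operation in \cite{NS2} before the color bookkeeping in your second case can be made to work.
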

\begin{lemma}
$N$-composition of 
$(b, n, p)$-
shuffle
has the same distribution to 
\beq
\overbrace{ (b,n,p) \circ \cdots  \circ (b,n,p) }^N
\stackrel{d}{=}
\begin{cases}
(b^N, n, p)  & (N : \mbox{odd} )\\
R \circ (b^N, n, p) & (N : \mbox{even} )
\end{cases}
\eeq
\end{lemma}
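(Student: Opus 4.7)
The plan is to proceed by induction on $N$, using the preceding composition lemma (which for the present subsection's standing assumption $b\equiv -1\pmod p$ tells us that an independent composition of a $(b_1,n,p)$-shuffle with a $(b,n,p)$-shuffle equals in distribution $R\circ (b_1 b,n,p)$). The only additional fact needed is that $R$ is an involution, i.e.\ $R^2=\mathrm{id}$, which is immediate from its definition $\sigma_c(i)\mapsto p-\sigma_c(i)$ on $\mathbf{Z}_p$.

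The base case $N=1$ is trivial, since $(b,n,p)=(b^1,n,p)$ and $N=1$ is odd. For the inductive step, write the $(N+1)$-fold composition as $Y_N\circ X$, where $Y_N$ denotes the $N$-fold composition and $X$ is an independent $(b,n,p)$-shuffle. Split according to the parity of $N$.

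If $N$ is odd, the inductive hypothesis gives $Y_N\stackrel{d}{=}(b^N,n,p)$; then, since $b\equiv -1\pmod p$, applying the composition lemma with $b_1=b^N$ and $b_2=b$ yields $Y_N\circ X\stackrel{d}{=}R\circ(b^{N+1},n,p)$, matching the even case for $N+1$. If $N$ is even, the inductive hypothesis gives $Y_N\stackrel{d}{=}R\circ Z_N$ with $Z_N$ a $(b^N,n,p)$-shuffle; using associativity of composition together with the fact that $R$ is deterministic and applied on the outside, we get $Y_N\circ X\stackrel{d}{=}R\circ(Z_N\circ X)\stackrel{d}{=}R\circ R\circ(b^{N+1},n,p)=(b^{N+1},n,p)$ by the composition lemma and $R^2=\mathrm{id}$, matching the odd case for $N+1$.

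There is no deep obstacle: the argument is pure bookkeeping of parities and of the single external factor $R$. The only thing worth being careful about is that the composition lemma is applied with the \emph{second} shuffle $X$ being a single $(b,n,p)$-shuffle (so that the condition $b_2\equiv\pm 1\pmod p$ is determined by $b$, not by $b^N$), and that the $R$ inherited from the inductive hypothesis slides freely past the composition because it is a deterministic map acting on the outside.
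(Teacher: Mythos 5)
Your induction is correct and is precisely the argument the paper intends: the lemma is stated there without proof as an immediate consequence of Lemma \ref{composition}, and your bookkeeping (peeling off the innermost factor so that the inherited $R$ stays on the outside, then cancelling via $R^2=\mathrm{id}$) is the natural way to make that explicit. No issues.
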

By using above lemmas, 
we have a formula for the 
$N$-composition 
of the element in 
${\bf C}[ G_{p, n} ]$ 
corresponding to 
$(b, n, p)$-shuffle.
%
\begin{proposition}
Let 
\beq
S 
:=
\sum_{\sigma}
P ( (b,n,p) = \sigma ) \sigma
\in 
{\bf C}(G_{p, n}).
\eeq
Then the 
$N$-composition of 
$S$ 
is given by 
\beq
S^N
&=&
\begin{cases}
\sum_k 
(b^{-k})^N
\sum_i
w_{ik} 
R \circ \Theta'_i
&
(N : odd)
\\
\sum_k
(b^{-k})^N
\sum_i
u_{ik}
R \circ \Theta_i
&
(N : even)
\end{cases}
\eeq
where we set
\beq
\Theta_i 
:=
\sum_{\sigma \,:\, d(\sigma^{-1}) = i}
\sigma, 
\quad
\Theta'_i 
:=
\sum_{\mu \, :  \, d'(\mu^{-1} ) = i}
\mu,  
\quad
w_{ik}
:=
[x^{n-k}]
\left(
\begin{array}{c}
n + \dfrac {x+1}{p} - 1 - i \\ n
\end{array}
\right). 
\eeq
\end{proposition}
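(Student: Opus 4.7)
The plan is to compute $S^N$ distributionally via the composition lemma and then re-expand the resulting probabilities as polynomials in $b^N$ whose coefficients are exactly $w_{ik}$ or $u_{ik}$. By the $N$-composition lemma stated just above the proposition, $S^N$ equals the distribution of $(b^N,n,p)$-shuffle when $N$ is odd (where $b^N\equiv -1\pmod p$), and of $R\circ(b^N,n,p)$-shuffle when $N$ is even (where $b^N\equiv 1\pmod p$). So in both cases we land in a setting covered by Lemma~3.7, but in different subcases.

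For $N$ odd, write $b^N=pc'_N-1$. Since $R^2=\mathrm{id}$, we have $P((b^N,n,p)=\sigma)=P(R\circ(b^N,n,p)=R\circ\sigma)$, and Lemma~3.7(2) gives this as $(b^N)^{-n}\binom{n+c'_N-1-d'((R\circ\sigma)^{-1})}{n}$. The crucial algebraic observation is that, as a polynomial in $x$, the expression $\binom{n+(x+1)/p-1-i}{n}$ is by definition $\sum_k w_{ik}x^{n-k}$. Substituting $x=b^N$ and pulling out the factor $(b^N)^{-n}$ therefore yields
\begin{equation*}
P((b^N,n,p)=\sigma)=\sum_k w_{d'((R\circ\sigma)^{-1}),k}\,(b^{-k})^N.
\end{equation*}
For $N$ even, write $b^N=pc''_N+1$. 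Then $P(R\circ(b^N,n,p)=\sigma)=P((b^N,n,p)=R\circ\sigma)$, which by Lemma~3.7(1) is $(b^N)^{-n}\binom{n+c''_N-d((R\circ\sigma)^{-1})}{n}$. Since $\binom{n+(x-1)/p-i}{n}=\sum_k u_{ik}x^{n-k}$ by the definition of $u_{ik}$, we similarly obtain $\sum_k u_{d((R\circ\sigma)^{-1}),k}(b^{-k})^N$.

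It remains to repackage the sum $\sum_\sigma P(\cdots)\sigma$ as an element of $\mathbb{C}[G_{p,n}]$. Changing variables to $\mu:=R\circ\sigma$ (so $\sigma=R\circ\mu$) and grouping by the descent statistic of $\mu^{-1}$ gives
\begin{equation*}
\sum_{\sigma:\,d'((R\circ\sigma)^{-1})=i}\sigma
=\sum_{\mu:\,d'(\mu^{-1})=i} R\circ\mu
=R\circ\Theta'_i,
\end{equation*}
and analogously $\sum_{\sigma:\,d((R\circ\sigma)^{-1})=i}\sigma=R\circ\Theta_i$ in the even case. Substituting and interchanging the $k$- and $i$-sums yields the two formulas of the proposition. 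The main conceptual obstacle is not any hard estimate but the careful bookkeeping of the $R$-twist: one must consistently interpret the Lemma~3.7 formulas (which are written for $R\circ(b,n,p)$ on the $b\equiv -1$ side and for $(b,n,p)$ itself on the $b\equiv 1$ side) when $N$ changes the parity of $b^N\bmod p$, and verify that the shift $\sigma\mapsto R\circ\mu$ correctly converts the descent statistic of $\sigma^{-1}$ (with respect to the appropriate order) into that of $\mu^{-1}$ while extracting a clean factor of $R$ from the group-algebra element.
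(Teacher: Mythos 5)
Your argument is correct and is exactly the ``parallel argument'' the paper alludes to but omits: you combine the $N$-composition lemma with the probability formula of the preceding lemma, expand the binomial coefficient as a polynomial in $b^N$ to read off $w_{ik}$ (resp.\ $u_{ik}$), and regroup by the descent statistic of $(R\circ\sigma)^{-1}$ to extract $R\circ\Theta'_i$ (resp.\ $R\circ\Theta_i$), mirroring the derivation of $e_{\ell-k}$ and equation (3.6) in the $b\equiv 1\pmod p$ case. The bookkeeping of the $R$-twist via $R^2=\mathrm{id}$ and the change of variable $\mu=R\circ\sigma$ is handled correctly.
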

If 
$P$ 
were diagonalizable, 
we would have 
\beq
\sum_i
w_{ik} 
R \circ \Theta'_i
=
\sum_i
u_{ik} 
R \circ \Theta_i
\eeq
for all 
$k$
which is not true except 
$k = 0$.
%

%
\subsection{Cut off for 
$(b, n, p)$-shuffle}
In this subsection 
we discuss the convergence speed of 
$\{ \sigma (k) \}$ 
to the stationary distribution.
Let 
$Q^k(\mu) :=
P( \sigma(k) = \mu)$
be the distribution of 
$\{\sigma (k) \}$, 
let 
$\pi$ 
be the uniform distribution on 
$G_{p, n}$, 
and let 
$\| \cdot \|_{TV}$
be the total variation distance.
Since 
the mixing time of 
$\{ \sigma (k) \}$
is in the order of 
$\frac 32 \log_b n$, 
we set 
$k = \frac 32 \log_b n + j$
to observe the asymptotic behavior of 
$\| Q^k - \pi \|_{TV}$ 
near the mixing time for large 
$n$.
By a similar argument as that in 
\cite{BD}, 
we have the following asymptotics, implying that the distance of 
$Q^k$ 
from the stationary distribution changes quickly near the mixing time. 
\begin{theorem}
Setting 
$k = \dfrac 32 \log_b n + j$
we have 
\beq
\| Q^k - \pi \|_{TV}
&=&
1 - 2 \Phi
\left(
- \frac {p}{4 \sqrt{3}
}
\cdot
b^{-j} 
\right)
+O(n^{-\frac 14})
\\
&\sim&
\left\{
\begin{array}{cl}
1 
- 
\exp \left( -\dfrac {p^2}{2 \cdot 4^2 \cdot 3}
\cdot 
b^{-2j} \right) 
& 
(j \to -\infty) \\
\dfrac {p}{2 \sqrt{6 \pi}} 
\cdot 
b^{-j} 
& 
(j \to \infty)
\end{array}
\right.
\\
\mbox{where }
&&
\Phi (x)
:=
\frac {1}{\sqrt{2 \pi}}
\int_{- \infty}^x 
e^{ - \frac {t^2}{2}} dt. 
\eeq
\end{theorem}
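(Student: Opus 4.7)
The plan is to adapt the Bayer--Diaconis argument \cite{BD} to the colored setting. By Lemma \ref{composition}, the $k$-fold composition of a $(b,n,p)$-shuffle is distributed as a single $(b^k,n,p)$-shuffle (using $b\equiv 1\pmod p$), so combining this with the explicit density formula for the $(b,n,p)$-shuffle stated earlier gives
\[
Q^k(\sigma)=b^{-kn}\binom{n+c_k-d(\sigma^{-1})}{n},\qquad c_k:=\frac{b^k-1}{p}.
\]
This depends on $\sigma$ only through $X:=d(\sigma^{-1})$; writing $f(X):=p^n n!\cdot Q^k(\sigma)$, one has
\[
\|Q^k-\pi\|_{TV}=\tfrac12\,\mathbf{E}\bigl[|f(X)-1|\bigr],
\]
with $X$ the d-descent count of a uniformly random element of $G_{p,n}$. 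Using $pc_k=b^k-1$, a direct rearrangement yields the clean product form
\[
f(X)=\prod_{i=1}^n\left(1+\frac{p(i-X)-1}{b^k}\right).
\]

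Setting $k=\tfrac32\log_b n+j$, so $b^k=n^{3/2}b^j$, and $X=n/2+Z\sqrt{n/12}$, a second-order Taylor expansion of $\log(1+\,\cdot\,)$ and summation over $i$ yields
\[
\log f(X)=-\alpha Z-\tfrac12\alpha^2+O(n^{-1/2}),\qquad \alpha:=\frac{p}{2\sqrt3}\,b^{-j},
\]
using $\sum_{i=1}^n(i-n/2)=n/2$ and $\sum_{i=1}^n(i-n/2)^2\sim n^3/12$. To justify treating $Z$ as standard normal, one establishes a quantitative CLT for $X$ under the uniform distribution on $G_{p,n}$: the d-descent indicators at positions $i<n$ are $1$-dependent with probability $1/2$ of being $1$ (two uniformly random distinct elements of $\Sigma$ are equally likely to be in either order), and the boundary indicator $\mathbf{1}_{\{\sigma_c(n)\neq 0\}}$ at $i=n$ is a bounded correction, yielding $\mathbf{E}[X]=(n-1)/2+(p-1)/p$, $\mathrm{Var}(X)\sim n/12$, and $Z\Rightarrow N(0,1)$ with a Berry--Esseen rate of $O(n^{-1/2})$.

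With the log-normal approximation $f(X)\approx \exp(-\alpha Z-\alpha^2/2)$, the density $e^{-\alpha z-\alpha^2/2}\phi(z)$ is that of $N(-\alpha,1)$, so one computes
\[
\mathbf{E}\bigl[(1-f(X))^+\bigr]=\mathbf{P}(Z>-\alpha/2)-\mathbf{P}(Z'>-\alpha/2)=2\Phi(\alpha/2)-1,
\]
where $Z'\sim N(-\alpha,1)$. Since $\mathbf{E}[f(X)]=1$, we have $\mathbf{E}[|f(X)-1|]=2\mathbf{E}[(1-f(X))^+]=4\Phi(\alpha/2)-2$, hence
\[
\|Q^k-\pi\|_{TV}=2\Phi(\alpha/2)-1=1-2\Phi\!\left(-\frac{p}{4\sqrt3}b^{-j}\right).
\]
The asymptotics as $j\to\pm\infty$ follow from the small- and large-argument expansions of $1-2\Phi(-x)$ applied to $x=(p/(4\sqrt3))b^{-j}$, noting that $x^2/2=(p^2/96)b^{-2j}$.

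The main obstacle will be the uniform quantitative control of errors across the range of $j$ of interest. The Taylor remainder in the log-expansion degrades when $|\alpha|$ becomes large (the regime $j\to-\infty$) and must be treated in tandem with the Berry--Esseen rate, combining to the stated $O(n^{-1/4})$ correction; the color structure adds only the bounded boundary term $\mathbf{1}_{\{\sigma_c(n)\neq 0\}}$, which produces an $O(1)$ mean shift and does not disturb the leading Gaussian limit.
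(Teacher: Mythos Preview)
Your proposal is correct and follows precisely the route the paper indicates: it explicitly says the result follows ``by a similar argument as that in \cite{BD}'' and gives no further details, so your adaptation of the Bayer--Diaconis analysis via Lemma~\ref{composition}, the density formula $Q^k(\sigma)=b^{-kn}\binom{n+c_k-d(\sigma^{-1})}{n}$, the product expansion of $f(X)$, and the log-normal/CLT computation is exactly what is intended. The only cosmetic point is that the paper does not spell out the $O(n^{-1/4})$ bookkeeping either, so your remarks on combining the Taylor remainder with the Berry--Esseen rate are a welcome elaboration rather than a deviation.
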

\vspace*{1em}

This work is partially supported by 
JSPS KAKENHI Grant 
Number 20K03659(F.N.)

%
\small

\end{document}